\title{Some properties defined by relative versions of star-covering properties II}
\author{Maddalena Bonanzinga, Davide Giacopello and Fortunato Maesano\\
\\
\small\emph{Dedicated to the memory of Mikhail (Misha) Matveev}
}
\date{}
\begin{document}

 \maketitle

\newtheorem{theorem}{Theorem} [section]
\newtheorem{corollary}{Corollary}[section]
\newtheorem{question}{Question}[section]
\newtheorem{example}{Example}[section]
\newtheorem{lemma}{Lemma}[section]
\newtheorem{proposition}{Proposition}[section]
\newtheorem{property}{Property}[section]
\newtheorem{definition}{Definition}[section]
\newtheorem{remark}{Remark}[section]
\newtheorem{problem}{Problem}[section]

\newcommand{\my}[1]{\textcolor{red}{\sf #1}}
\newcommand{\green}[1]{\textcolor{green}{\sf #1}}
\newcommand{\blu}[1]{\textcolor{blue}{\sf #1}}
\newcommand{\violet}[1]{\textcolor{violet}{\sf #1}}

\begin{abstract}
 In this paper we consider 
 some recent relative versions of Menger property  called set strongly star Menger and set star Menger properties and the corresponding Hurewicz-type properties. In particular, using \cite {BMae}, we "easily" prove that the set strong star Menger and set strong star Hurewicz properties are between countable compactness and the property of having countable extent. Also we show that the extent of a regular set star Menger or a set star Hurewicz space cannot exceed $\frak c$. Moreover, we construct (1) a consistent example of a set star Menger (set star Hurewicz) space which is not set strongly star Menger (set strongly star Hurewicz) and  show that (2) the product of a set star Menger (set star Hurewicz) space with a compact space need not be set star Menger (set star Hurewicz). In particular, (1) and (2)  answer to some questions posed by Ko\v{c}inac, Konca and Singh in  \cite{KKS-MS} and \cite{S-AM}.
 
 \end{abstract}

{\bf Keywords: } Star compact, strongly star compact, star Lindel\"of, strongly star Lindel\"of, star Menger, strongly star Menger, star Hurewicz, strongly star Hurewicz, set properties.

{\bf AMS Subject Classification:} 54D20

\section{Introduction}

Let ${\mathcal U}$ be a cover of a space $X$ and $A$ be a subset of $X$; the star of $A$ with respect to ${\mathcal U}$ is the set $st(A,{\mathcal U})=\bigcup\{U:U\in{\mathcal U}\;\hbox{and}\;U\cap A\neq\emptyset\}$. 
The star of a one-point set $\{x\}$ with respect to a cover ${\mathcal U}$ is denoted by $st(x,{\mathcal U})$. 

Recall that a space $X$ is 
		star compact, briefly SC
		(strongly star compact, briefly SSC)  if for every open cover $\mathcal{U}$ of the space $X$, there exists a
		finite subfamily $\mathcal V$ of $\mathcal U$	
		(resp., a finite subset $F$ of $X$)  such that $st(\bigcup{\mathcal V},{\mathcal U})=X$ (resp., $st(F,{\mathcal U})=X$) (see \cite{IK} where another terminology is used, and \cite{vDRRT});	$X$ is star Lindel\"of, briefly SL (strongly star Lindel\"of, briefly SSL) if for every open cover $\mathcal{U}$ of the space $X$, there exists  a countable subfamily $\mathcal V$ of $\mathcal U$ (resp., a countable subset $C$ of $X$)  such that $st(\bigcup{\mathcal V},{\mathcal U})=X$ (resp., $st(C,{\mathcal U})=X$) (see \cite{IK2} and \cite{IK3}, where different terminology is used).

\smallskip

In \cite{KS,KKS-MS} Ko\v cinac, Konca and Singh introduced the following relative versions of SC, SSC, SL and SSL properties.

\begin{definition}\rm\label{Def2}\cite{KKS-MS}\label{SC}
	A space $X$ is set star compact, briefly set SC (resp., set strongly star compact, briefly set SSC), if for every nonempty subset $A$ of $X$ and for every family $\mathcal U$ of open sets in X such that $\overline {A}\subseteq \bigcup{\mathcal U}$, there exists a finite
	subfamily $\mathcal V$ of $\mathcal U$ (resp., finite subset $F$ of $\overline A$) such that $st(\bigcup{\mathcal V},{\mathcal U})\supset A$
	(resp., $st(F,{\mathcal U})\supset A$).\footnote{Recently, the properties of Definition \ref{Def2} were studied in \cite{BMae}. Note that in \cite{BMae} there is a \underline{misprint} in the statment of the definition of "relatively$^*$ SSC" that the authors use to describe set SSC property: in particular, the authors write that the set "$F$ is a finite subset of $A$" instead of "$F$ is a finite subset of $\overline A$".}
\end{definition}	

Replacing "finite" with "countable" in Definition \ref{SC}, one obtains the classes of set star Lindel\"of (briefly set SL) and set strongly star Lindel\"of (briefly set SSL) spaces (see \cite{KS}).

\smallskip

In the following CC means countably compact.

\begin{proposition}\rm \cite[Proposition 2.2]{BMae}\label{prop}
In the class of Hausdorff spaces SSC, set SSC and CC are equivalent properties. 
\end{proposition}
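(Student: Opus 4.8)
The plan is to establish the cycle of implications set SSC $\Rightarrow$ SSC $\Rightarrow$ CC $\Rightarrow$ set SSC, which together yield the claimed equivalence. The first implication is immediate: given an open cover $\mathcal{U}$ of $X$, apply the definition of set SSC with $A=X$ (so that $\overline{A}=X\subseteq\bigcup\mathcal{U}$) to obtain a finite $F\subseteq X$ with $st(F,\mathcal{U})\supseteq X$, i.e. $st(F,\mathcal{U})=X$. The second implication, SSC $\Rightarrow$ CC, is the classical fact that in $T_1$ (in particular Hausdorff) spaces strong star compactness coincides with countable compactness \cite{vDRRT}, also recorded in \cite{BMae}; this is the ingredient one \emph{uses} rather than reproves.

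For the third implication, CC $\Rightarrow$ set SSC, I would argue directly. Let $A\subseteq X$ be nonempty and let $\mathcal{U}$ be a family of open sets with $\overline{A}\subseteq\bigcup\mathcal{U}$, and suppose towards a contradiction that $A\not\subseteq st(F,\mathcal{U})$ for every finite $F\subseteq\overline{A}$. Recursively pick $a_0\in A$, and having chosen $a_0,\dots,a_n\in A$ pick $a_{n+1}\in A\setminus st(\{a_0,\dots,a_n\},\mathcal{U})$, which is possible by assumption. Since every $a_i$ lies in some member of $\mathcal{U}$, each $a_i$ with $i\le n$ belongs to $st(\{a_0,\dots,a_n\},\mathcal{U})$, hence the $a_n$ are pairwise distinct and $S=\{a_n:n\in\omega\}$ is infinite. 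By countable compactness (and Hausdorffness) $S$ has an $\omega$-accumulation point $p$, and $p\in\overline{S}\subseteq\overline{A}\subseteq\bigcup\mathcal{U}$, so $p\in U$ for some $U\in\mathcal{U}$; then $U$ contains infinitely many $a_n$, and picking $n<m$ with $a_n,a_m\in U$ we get $a_m\in U\subseteq st(\{a_0,\dots,a_{m-1}\},\mathcal{U})$, contradicting the choice of $a_m$. Equivalently, and closer to the "easy" argument via \cite{BMae}: $\overline{A}$ is a closed subspace of a CC space, hence CC, hence SSC; applying SSC to the trace cover $\{U\cap\overline{A}:U\in\mathcal{U}\}$ and using that $st(F,\{U\cap\overline{A}:U\in\mathcal{U}\})=\overline{A}\cap st(F,\mathcal{U})$ for $F\subseteq\overline{A}$ gives a finite $F\subseteq\overline{A}$ with $\overline{A}\subseteq st(F,\mathcal{U})$, hence $A\subseteq st(F,\mathcal{U})$.

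The genuine obstacle is the imported direction SSC $\Rightarrow$ CC, i.e. that mere Hausdorffness suffices there: from a non-countably-compact $T_1$ space one first extracts a countably infinite closed discrete set $D=\{x_n:n\in\omega\}$, chooses open $G_n\ni x_n$ with $G_n\cap D=\{x_n\}$, and considers the open cover $\{X\setminus D\}\cup\{G_n:n\in\omega\}$; a finite $F$ with $st(F,\cdot)=X$ would have to meet every $G_n$, which is impossible as soon as the $G_n$ are pairwise disjoint — and in a regular space they can be chosen so, whereas for merely Hausdorff spaces one needs the more delicate argument of \cite{vDRRT}. Since that ingredient is taken as known, the only new work in the proof above is the short recursion establishing CC $\Rightarrow$ set SSC.
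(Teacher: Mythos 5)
The paper offers no proof of this proposition — it is quoted verbatim from \cite{BMae} — but your argument is correct and follows exactly the intended route: set SSC $\Rightarrow$ SSC by specializing to $A=X$, SSC $\Rightarrow$ CC for Hausdorff spaces via the classical result of \cite{vDRRT}, and CC $\Rightarrow$ set SSC either by your direct recursion (which in fact needs no separation axiom) or, as the paper itself later indicates, by heredity of CC to the closed set $\overline{A}$ together with the trace-cover computation. The only blemish is the parenthetical suggestion that $T_1$ already suffices for SSC $\Rightarrow$ CC — the classical implication (as your own closing paragraph acknowledges) is a Hausdorff result, which is precisely why the proposition is stated for Hausdorff spaces — but since you only invoke the Hausdorff case, this does not affect the proof.
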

We prove the following
\begin{proposition}\rm \label{regular set SC} 
	In the class of regular spaces set SC and CC are equivalent properties. 
\end{proposition}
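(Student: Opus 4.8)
The implication that countable compactness implies set SC requires nothing new: a regular space is Hausdorff, so by Proposition~\ref{prop} a countably compact regular space is set SSC, and set SSC trivially implies set SC. Indeed, given a nonempty $A\subseteq X$ and a family $\mathcal U$ of open sets with $\overline A\subseteq\bigcup\mathcal U$, one chooses a finite $F\subseteq\overline A$ with $st(F,\mathcal U)\supseteq A$, then covers $F$ by a finite $\mathcal V\subseteq\mathcal U$ (possible since $F\subseteq\overline A\subseteq\bigcup\mathcal U$), and observes that $st(\bigcup\mathcal V,\mathcal U)\supseteq st(F,\mathcal U)\supseteq A$. So the content is the converse, set SC $\Rightarrow$ CC, which I would prove by contraposition.

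Assume $X$ is regular and not countably compact. Then $X$ contains a countably infinite closed discrete subspace $D=\{x_n:n\in\omega\}$ (a space that is not countably compact contains one, and any subset of a closed discrete set is again closed and discrete). I would take $A=D$, so $\overline A=D$, and build a family $\mathcal U=\{U_n:n\in\omega\}$ of open sets with $x_n\in U_n$, with $U_n\cap D=\{x_n\}$ for every $n$, and with the $U_n$ \emph{pairwise disjoint}. Granting such a $\mathcal U$, the conclusion is immediate: $D=\overline A\subseteq\bigcup\mathcal U$, while for any finite $\mathcal V=\{U_n:n\in F\}$ with $F\subseteq\omega$ finite, picking $j\in\omega\setminus F$ yields $U_j\cap\bigcup\mathcal V=\emptyset$ by disjointness; since $x_j$ belongs to no member of $\mathcal U$ other than $U_j$ (because each $U_m\cap D=\{x_m\}$), we get $x_j\notin st(\bigcup\mathcal V,\mathcal U)$, so $st(\bigcup\mathcal V,\mathcal U)\not\supseteq A$. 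Thus $X$ fails set SC.

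The only step with real content, and the only place where regularity is used rather than mere Hausdorffness, is the construction of the pairwise disjoint $U_n$. Since $X$ need not be normal I cannot separate $D$ from a closed set in one stroke; but at each stage I only have to separate a \emph{point} from a \emph{closed} set, which is exactly what regularity provides. Concretely, put $G_{-1}=X$ and recursively, at stage $n$, note that the open (hence regular) subspace $G_{n-1}$ contains $x_n$ and contains the closed set $\{x_k:k>n\}$ (this is $D$ with a finite set removed); by regularity of $G_{n-1}$ pick disjoint open sets $U_n\ni x_n$ and $G_n\supseteq\{x_k:k>n\}$ with $U_n,G_n\subseteq G_{n-1}$. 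Then $X=G_{-1}\supseteq G_0\supseteq G_1\supseteq\cdots$, each $U_n\subseteq G_{n-1}$, and each $U_n\cap G_n=\emptyset$; hence for $m<n$ we have $U_n\subseteq G_{n-1}\subseteq G_m$, so $U_m\cap U_n\subseteq U_m\cap G_m=\emptyset$. The same inclusions give $x_m\notin U_n$ whenever $m\ne n$ (for $m>n$ since $x_m\in G_n$ and $U_n\cap G_n=\emptyset$; for $m<n$ since $x_m\in U_m$, $U_m\cap G_m=\emptyset$, and $U_n\subseteq G_m$), so $U_n\cap D=\{x_n\}$. I expect the bookkeeping in this recursion --- nesting each $U_n$ inside the shrinking $G_{n-1}$ so that both pairwise disjointness and $U_n\cap D=\{x_n\}$ propagate --- to be the main (and essentially the only) delicate point; the rest is routine.
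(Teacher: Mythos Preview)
Your argument is correct and follows the same line as the paper: both directions coincide, and for the nontrivial one you take a countable closed discrete $D=\{x_n:n\in\omega\}$, produce pairwise disjoint open $U_n\ni x_n$ using regularity, and observe that no finite subfamily of $\{U_n:n\in\omega\}$ can star-cover $D$. The only difference is granularity: the paper simply asserts ``by regularity, there exists a disjoint family $\{U_n:n\in\omega\}$'' and leaves the verification $D\not\subseteq st(\bigcup\mathcal V,\mathcal U)$ to the reader, whereas you spell out the recursive construction (nesting $U_n\subseteq G_{n-1}$) and the star computation; your extra requirement $U_n\cap D=\{x_n\}$ is in fact automatic from pairwise disjointness, so the bookkeeping you flagged as delicate is a bit more than strictly necessary, but it does no harm.
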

\begin{proof} Of course, every CC space is set SC. Now, let $X$ a regular set SC space. By contradiction, assume there exists a closed and discrete subspace $D=\{x_n: n\in\omega\}$ of $X$. By regularity, there exists a disjoint family ${\cal U}=\{U_n : n \in\omega\}$ of open subsets of $X$ such that $x_n\in U_n$, for every $n\in\omega$.  Then $D\subseteq \bigcup{\mathcal U}$ but  for every finite subfamily $\mathcal V$ of $\mathcal U$, we have that
		$D\not\subset st(\bigcup{\mathcal V},{\mathcal U})$;
		a contradiction.
\end{proof} 
 For a space $X$, $e(X)=\sup\{|C|\,\,:\,\, C\hbox{ is a closed and discrete subset of }X\}$ and $c(X)=\sup\{|{\cal A}|\,\,:\,\, {\cal A}\hbox{ is a cellular family of }X\}$ are, respectively, the extent and the cellularity of $X$. One says that a space $X$ has the countable chain condition (briefly ccc) if $c(X)=\omega$.
\begin{proposition}\rm \cite[Proposition 3.1]{BMae} \label{Proposition 3.1}
In the class of $T_1$ spaces, 
set SSL spaces are exactly spaces having countable extent. 
\end{proposition}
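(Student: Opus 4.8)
The statement is an equivalence, so I would prove the two implications separately, with essentially all the work in the direction ``countable extent $\Rightarrow$ set SSL''.

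\emph{From failure of countable extent to failure of set SSL.} Suppose $e(X)>\omega$ and fix an uncountable closed discrete $D=\{d_\alpha:\alpha<\omega_1\}$. For each $\alpha$ choose an open $U_\alpha$ with $U_\alpha\cap D=\{d_\alpha\}$; this is available from discreteness of $D$ alone, so $T_1$ is not even needed here. Put $A=D$, so $\overline A=D$, and let $\mathcal U=\{U_\alpha:\alpha<\omega_1\}$, which covers $\overline A$. For any countable $C\subseteq\overline A=D$ one computes $st(C,\mathcal U)\cap D=C\subsetneq D=A$, so $st(C,\mathcal U)\not\supseteq A$; hence $X$ is not set SSL.

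\emph{From countable extent to set SSL, via contrapositive.} Assume $X$ is not set SSL and fix a witnessing pair: a nonempty $A\subseteq X$ and a family $\mathcal U$ of open sets with $\overline A\subseteq\bigcup\mathcal U$ such that $st(C,\mathcal U)\not\supseteq A$ for every countable $C\subseteq\overline A$. Build points $x_\alpha$ ($\alpha<\omega_1$) by transfinite recursion inside $A$: having chosen the countable set $\{x_\beta:\beta<\alpha\}\subseteq A\subseteq\overline A$, the failure hypothesis gives $A\setminus st(\{x_\beta:\beta<\alpha\},\mathcal U)\neq\emptyset$, and I pick $x_\alpha$ in it; for each $\alpha$ also fix $U_\alpha\in\mathcal U$ with $x_\alpha\in U_\alpha$ (possible since $x_\alpha\in A\subseteq\overline A\subseteq\bigcup\mathcal U$). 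The recursion yields the asymmetric clause $x_\alpha\notin st(x_\beta,\mathcal U)$ for all $\beta<\alpha$, and from it one gets, for $D:=\{x_\alpha:\alpha<\omega_1\}$, that $U_\alpha\cap D=\{x_\alpha\}$: if $x_\beta\in U_\alpha$ with $\beta<\alpha$ then $x_\alpha\in U_\alpha\subseteq st(x_\beta,\mathcal U)$, a contradiction, and if $\beta>\alpha$ then $x_\beta\in U_\alpha\subseteq st(x_\alpha,\mathcal U)$, again a contradiction. Thus $D$ is an uncountable discrete subspace.

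\emph{The main point --- closedness of $D$.} It remains to see that $D$ is closed, which is where $T_1$ is genuinely used (without it one could only conclude that a non-closed uncountable discrete subspace exists). Let $x\in\overline D$. Since $\overline D\subseteq\overline A\subseteq\bigcup\mathcal U$, pick $U\in\mathcal U$ with $x\in U$; then $U\cap D\neq\emptyset$, so $\alpha_0:=\min\{\alpha:x_\alpha\in U\}$ exists, and exactly the trick used above (minimality excludes $\beta<\alpha_0$, while $x_\beta\in U\subseteq st(x_{\alpha_0},\mathcal U)$ is impossible for $\beta>\alpha_0$) shows $U\cap D=\{x_{\alpha_0}\}$. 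If $x\neq x_{\alpha_0}$, then by $T_1$ the set $U\setminus\{x_{\alpha_0}\}$ is an open neighborhood of $x$ disjoint from $D$, contradicting $x\in\overline D$; hence $x=x_{\alpha_0}\in D$. So $D$ is an uncountable closed discrete subset, giving $e(X)>\omega$. I expect the only real obstacle to be arranging the closedness argument so that the ``least index hitting $U$'' device that produces discreteness is reused verbatim to collapse $U\cap D$ to a single point, after which $T_1$ closes the gap.
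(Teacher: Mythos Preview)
Your proof is correct. Note, however, that the present paper does not supply its own proof of this proposition: it is quoted verbatim from \cite[Proposition 3.1]{BMae}, so there is no in-paper argument to compare your attempt against. Your approach---the easy direction via a closed discrete set with its separating neighbourhoods, and the harder direction via a transfinite recursion producing points $x_\alpha\in A\setminus st(\{x_\beta:\beta<\alpha\},\mathcal U)$---is the natural one, and your use of $T_1$ at the end (to turn ``every $U\in\mathcal U$ meets $D$ in exactly one point'' into closedness of $D$ in $X$) is precisely where the separation hypothesis is needed.
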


\begin{proposition}\rm \cite[Corollary 3.3]{BMae} 
	Every ccc space is set SL. 
\end{proposition}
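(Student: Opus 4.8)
The plan is to mimic the classical argument that every ccc space is star Lindel\"of, adapting it to the relative (``set'') setting. Fix a nonempty subset $A$ of $X$ and a family $\mathcal{U}$ of open subsets of $X$ with $\overline{A}\subseteq\bigcup\mathcal{U}$. Discarding $\emptyset$ from $\mathcal{U}$ if necessary, I may assume every member of $\mathcal{U}$ is nonempty; and since $A\neq\emptyset$ and $A\subseteq\overline{A}\subseteq\bigcup\mathcal{U}$, the family $\mathcal{U}$ is nonempty. By Zorn's lemma choose a subfamily $\mathcal{V}\subseteq\mathcal{U}$ that is maximal with respect to being pairwise disjoint; thus $\mathcal{V}$ is a (nonempty) cellular family of $X$ contained in $\mathcal{U}$.

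Next I would invoke the hypothesis exactly once: since $\mathcal{V}$ is a cellular family, $|\mathcal{V}|\le c(X)=\omega$, so $\mathcal{V}$ is countable. It remains to verify that $A\subseteq st(\bigcup\mathcal{V},\mathcal{U})$. Pick $x\in A$. Then $x\in\overline{A}\subseteq\bigcup\mathcal{U}$, so there is $U\in\mathcal{U}$ with $x\in U$. By maximality of $\mathcal{V}$, the set $U$ meets some member of $\mathcal{V}$ (otherwise $\mathcal{V}\cup\{U\}$ would be a strictly larger pairwise disjoint subfamily of $\mathcal{U}$, unless $U\in\mathcal{V}$ already, in which case $U$ meets a member of $\mathcal{V}$ trivially). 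Hence $U\cap\bigcup\mathcal{V}\neq\emptyset$, so $U\subseteq st(\bigcup\mathcal{V},\mathcal{U})$, and in particular $x\in st(\bigcup\mathcal{V},\mathcal{U})$. As $x\in A$ was arbitrary, $A\subseteq st(\bigcup\mathcal{V},\mathcal{U})$, so $\mathcal{V}$ is the required countable subfamily of $\mathcal{U}$.

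There is no real obstacle here; the argument is a routine transfer of the absolute statement, and the ccc hypothesis is used only to bound $|\mathcal{V}|$. The two points that merit a little care are: first, we only need the star of $\bigcup\mathcal{V}$ to absorb the points of $A$ itself (not all of $\overline{A}$), while the star is computed with respect to all of $\mathcal{U}$ — this is precisely why the same maximal-disjoint-family trick still works; and second, one should restrict attention to the nonempty members of $\mathcal{U}$ so that a ``maximal pairwise disjoint subfamily'' really is a cellular family, to which the bound $c(X)=\omega$ applies.
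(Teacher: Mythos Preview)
Your argument is correct and is the standard maximal-pairwise-disjoint-subfamily proof that ccc implies star Lindel\"of, transplanted verbatim to the relative setting. Note, however, that the paper does not supply its own proof of this proposition: it is simply quoted as \cite[Corollary~3.3]{BMae}, so there is nothing to compare your approach against here.
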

Recall that a space $X$ is 
Menger, briefly M, if for each sequence $({\cal U}_n : n \in \omega)$ of open covers of $X$ there exists a sequence $({\cal V}_n : n \in \omega)$  such that ${\mathcal V}_n$, $n\in\omega$, is a finite subset of ${\mathcal U}_n$ and $X=\bigcup_{n \in \omega}\bigcup{\mathcal V}_n$; $X$ is Hurewicz, briefly H, if for each sequence $({\cal U}_n : n \in \omega)$ of open covers of $X$ there exists a sequence $({\cal V}_n : n \in \omega)$  such that ${\mathcal V}_n$, $n\in\omega$, is a finite subset of ${\mathcal U}_n$ and for every $x\in X$, $x \in\bigcup{\mathcal V}_n$ for all but finitely many $n\in\omega$.

 In \cite{K, K1, BCK} star versions of Menger and Hurewicz properties called star Menger, strongly star Menger, star Hurewicz and strongly star Hurewicz properties (Definitions \ref{star Menger} and \ref{star Hurewicz} below) were introduced and recently in \cite{KKS-MS} Ko\v{c}inac, Konca and Singh considered some relative versions of them called, respectively, set star Menger, set strongly star Menger, set star Hurewicz and set strongly star Hurewicz properties.

\smallskip
In this paper we study the previous set properties. In particular,  using \cite {BMae}, we easily prove that set strongly star Menger and set strongly star Hurewicz properties are between countable compactness and property of having countable extent. Also we show that the extent of a regular set star Menger or a set star Hurewicz space cannot exceed $\frak c$ and use this result to give a Tychonoff star Menger (star Hurewicz) space which is not set star Menger (set star Hurewicz). In fact, the constructed example (Example \ref{EXAMPLE}) is even star compact and then it gives a positive answer to the following question.

\begin{question}\rm\cite{S-AM} 
	Does there exist a Tychonoff star compact space which is not set star compact?
\end{question}

Moreover, we give a consistent answer (Example \ref{set SM, not set SSM}) to the following question.

\begin{question}\rm\cite{KKS-MS} 
	Does there exist a Tychonoff set star Menger space which is not set strongly star Menger?
\end{question}

Further, we answer in the negative  (Example \ref{product}) to the following

\begin{question}
	\rm \cite{KKS-MS} Is the product of a set star Menger space with a compact space a set star Menger space?
\end{question}

In fact Example \ref{product} shows even more: it proves that set star compact and set star Lindel\"of properties are not preserved in the product with compact spaces. Then, the same example answers in the negative to the following two questions.

\begin{question}
	\rm Is the product of a set star Hurewicz space with a compact space a set star Hurewicz space?
\end{question}

\begin{question}
	\rm \cite{S-AM} Is the product of a set star compact space with a compact space a set star compact space?
\end{question}

\smallskip

Moreover we give partial answers to the following questions.

\begin{question}\label{QSSM}
	\rm \cite{KKS-MS} Is the product of a set strongly star Menger space with a compact space a set strongly star Menger space?
\end{question}

\begin{question}
	\rm  Is the product of a set strongly star Hurewicz space with a compact space  a  set strongly star Hurewicz space?
\end{question}

\smallskip

No separation axiom will be assumed a priori.
Recall that a family of sets is almost disjoint if the intersection
of any two distinct elements is finite. Let $\cal A$ be an almost disjoint family of infinite subsets of
$\omega$. Put $\Psi({\mathcal A})=\omega\cup {\mathcal A}$ and topologize $\Psi({\mathcal A})$ as follows: the points of $\omega$ are isolated
and a basic neighbourhood of a point $a\in\mathcal A$ takes the form $\{a\}\cup (a\setminus F)$, where
$F$ is a finite set. $\Psi({\mathcal A})$ is called Isbell-Mr\' owka or $\Psi$-space (see \cite{Eng}).
Recall that for $f,g\in \omega^\omega$, $f\leq^*g$ means that $f(n)\leq g(n)$ for all but finitely many $n$ (and $f\leq g$ means that $f(n)\leq g(n)$ for all $n\in\omega$). A subset $B\subseteq \omega^\omega$ is bounded if there is $g\in\omega^\omega$ such that $f\leq^*g$ for every $f\in B$. $D\subseteq \omega^\omega$ is cofinal if for each $g\in \omega^\omega$ there is $f\in D$ such that $g\leq^*f$. The minimal cardinality of an unbounded subset of $\omega^\omega$ is denoted by ${\frak b}$, and the minimal cardinality of a cofinal subset of $\omega^\omega$ is denoted by ${\frak d}$. The value of ${\frak d}$ does not change if one considers the relation $\leq$ instead of $\leq^*$ \cite[Theorem 3.6]{vD}.

\section{On set star Menger and set strongly star Menger properties.}

In \cite{K}, Ko\v{c}inac introduced the following star versions of Menger property.

\begin{definition}
	\rm \cite{K}\label{star Menger}
A space $X$ is

$\bullet$ star Menger (briefly, SM) if for each sequence $({\cal U}_n : n \in \omega)$ of open covers of $X$ there exists a sequence $({\cal V}_n : n \in \omega)$  such that ${\mathcal V}_n$, $n\in\omega$, is a finite subset of ${\mathcal U}_n$ and $X=\bigcup_{n \in \omega}st(\bigcup{\mathcal V}_n,{\mathcal U}_n)$;

$\bullet$ strongly star Menger (briefly, SSM) if for each sequence $({\cal U}_n : n \in \omega)$ of open covers of $X$ there exists a sequence $(F_n : n \in \omega)$  such that $F_n$, $n\in\omega$, is a finite subset of $X$ and $X=\bigcup_{n \in \omega}st(F_n,{\mathcal U}_n)$.
\end{definition}

The following result gives a characterization of the SSM property in terms of a relative version of it. 

\begin{proposition}\rm \label{caratterizzazione} The following are equivalent for a space $X$:	\begin{enumerate}
		\item 	
		$X$ is SSM;
		\item 
		
		for each nonempty subset $A$ of $X$ and each sequence $({\cal U}_n : n \in \omega)$ of collection of open sets of $X$ such that $\overline A\subset \bigcup{\mathcal U}_n$ for every $n\in\omega$, there exists a sequence $(F_n : n \in \omega)$  such that $F_n$, $n\in\omega$, is a finite subset of $X$ and $A\subset \bigcup_{n \in \omega}st(F_n,{\mathcal U}_n)$.
	\end{enumerate}

\begin{proof}
$2.\Rightarrow 1.$ is obvious. Let $A \subseteq X$ be a nonempty subset and $({\cal U}_n : n \in \omega)$ be a sequence of families of open sets of $X$ such that $\overline A\subseteq \bigcup {\mathcal{U}}_{n}$ for every $n \in \omega$. Define 
\begin{center}
	${\mathcal{U}}^{'}_{n} = {\mathcal{U}}_{n} \cup \{ X \setminus \overline{A} \}$ 
\end{center}
for all $n \in \omega$. Clearly, each ${\mathcal{U}}^{'}_{n}$ is an open cover for $X$. Since $X$ is SSM, there is a sequence $( F_{n} : n\in \omega)$ of finite subsets of $X$ such that $X=\bigcup_{n \in \omega}st(F_n,{\mathcal U}_n)$. Fix $x \in A$. Then there exists $n \in \omega$ such that $x \in st(F_{n},{\mathcal{U}}^{'}_{n})$. Observe that
\begin{center}
	$x \in st(F_{n},{\mathcal{U}}^{'}_{n}) \Leftrightarrow st(x,{\mathcal{U}}^{'}_{n}) \cap F_{n} \neq \emptyset$
\end{center}
We also have that
\begin{center}
	$st(x,{\mathcal{U}}^{'}_{n}) = \bigcup \{ U \in {\mathcal{U}}^{'}_{n} : x \in U \} = \bigcup \{ U \in {\mathcal{U}}_{n} : x \in U \} = st(x,{\mathcal{U}}_{n})$
\end{center}
So
\begin{center}
	$x \in st(F_{n},{\mathcal{U}}^{'}_{n}) \Leftrightarrow st(x,{\mathcal{U}}^{'}_{n}) \cap F_{n} \neq \emptyset \Leftrightarrow st(x,{\mathcal{U}}_{n}) \cap F_{n} \neq \emptyset \Leftrightarrow x \in st(F_{n},{\mathcal{U}}_{n})$.
\end{center}
Since $x$ is an arbitrary point of $A$, we have that $A\subset \bigcup_{n \in \omega}st(F_n,{\mathcal U}_n)$.
\end{proof}
\end{proposition}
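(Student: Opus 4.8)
The plan is to establish the two implications separately, with essentially all of the work in $(1)\Rightarrow(2)$. For $(2)\Rightarrow(1)$ I would simply specialize clause $(2)$ to $A=X$: then $\overline A=X$, so every sequence $(\mathcal U_n:n\in\omega)$ of open covers of $X$ automatically satisfies the hypothesis $\overline A\subseteq\bigcup\mathcal U_n$, and the conclusion $A\subseteq\bigcup_{n\in\omega}st(F_n,\mathcal U_n)$ becomes precisely $X=\bigcup_{n\in\omega}st(F_n,\mathcal U_n)$, i.e. the definition of SSM.

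For $(1)\Rightarrow(2)$, fix a nonempty $A\subseteq X$ and a sequence $(\mathcal U_n:n\in\omega)$ of families of open sets with $\overline A\subseteq\bigcup\mathcal U_n$ for each $n$. The idea is to upgrade each $\mathcal U_n$ to a genuine open cover of $X$ by adjoining the single open set $X\setminus\overline A$: put $\mathcal U'_n=\mathcal U_n\cup\{X\setminus\overline A\}$, which covers $\overline A$ by hypothesis and $X\setminus\overline A$ by the added member. Applying the assumed SSM property to $(\mathcal U'_n:n\in\omega)$ produces a sequence of finite sets $F_n\subseteq X$ with $X=\bigcup_{n\in\omega}st(F_n,\mathcal U'_n)$. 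I would then record the one point that needs care: for $x\in A$ one has $st(x,\mathcal U'_n)=st(x,\mathcal U_n)$, since the only element of $\mathcal U'_n$ not already in $\mathcal U_n$, namely $X\setminus\overline A$, does not contain $x$ (because $x\in A\subseteq\overline A$). Hence, using the equivalence $x\in st(F,\mathcal W)\Leftrightarrow st(x,\mathcal W)\cap F\neq\emptyset$, we obtain $x\in st(F_n,\mathcal U'_n)\Leftrightarrow x\in st(F_n,\mathcal U_n)$ for every $n$. Since $A\subseteq X=\bigcup_{n\in\omega}st(F_n,\mathcal U'_n)$, each $x\in A$ lies in $st(F_n,\mathcal U'_n)$ for some $n$, hence in $st(F_n,\mathcal U_n)$ for that $n$, and therefore $A\subseteq\bigcup_{n\in\omega}st(F_n,\mathcal U_n)$, as required.

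I do not expect a genuine obstacle here: this is the standard ``adjoin the complement to repair the cover'' device. The only things to be attentive to are to carry out the star computation through $st(x,\cdot)$ rather than directly, so that the irrelevance of the set $X\setminus\overline A$ at points of $A$ is transparent, and to note that SSM delivers the witnesses $F_n$ as finite subsets of $X$ — which is exactly what clause $(2)$ asks for — so no further adjustment of the $F_n$ is needed.
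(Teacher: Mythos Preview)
Your proposal is correct and follows essentially the same argument as the paper: adjoin $X\setminus\overline A$ to each $\mathcal U_n$ to obtain genuine open covers, apply SSM, and then observe that for $x\in A$ one has $st(x,\mathcal U'_n)=st(x,\mathcal U_n)$ because the added set misses $A$. Your treatment of $(2)\Rightarrow(1)$ by specializing to $A=X$ is exactly the ``obvious'' direction the paper alludes to.
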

In \cite{KKS-MS} the following relative version of the SM and SSM properties were considered.

\begin{definition}\rm \cite{KKS-MS} A space $X$ is 
	
	$\bullet$ set star Menger (shortly, set SM) if for each nonempty subset $A$ of $X$ and for each sequence $({\cal U}_n : n \in \omega)$ of collection of open sets of $X$ such that $\overline A\subset \bigcup{\mathcal U}_n$ for every $n\in\omega$, there exists a sequence $({\cal V}_n : n \in \omega)$  such that ${\mathcal V}_n$, $n\in\omega$, is a finite subset of ${\mathcal U}_n$ and $A\subset \bigcup_{n \in \omega}st(\bigcup{\mathcal V}_n,{\mathcal U}_n)$.
	
		$\bullet$ set strongly star Menger (shortly, set SSM) if for each nonempty subset $A$ of $X$ and for each sequence $({\cal U}_n : n \in \omega)$ of collection of open sets of $X$ such that $\overline A\subset \bigcup{\mathcal U}_n$ for every $n\in\omega$, there exists a sequence $(F_n : n \in \omega)$  such that $F_n$, $n\in\omega$, is a finite subset of $\overline A$ and $A\subset \bigcup_{n \in \omega}st(F_n,{\mathcal U}_n)$.
\end{definition}

\smallskip

The following result is easy to check.

\begin{proposition}\rm\label{ered}
A space $X$ is set SSM iff every closed subspace of $X$ is SSM.
\end{proposition}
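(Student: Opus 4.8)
The plan is to prove both implications directly from the definitions, using the relative characterization of SSM from Proposition \ref{caratterizzazione}.

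For the implication that set SSM implies every closed subspace is SSM, I would let $Y\subseteq X$ be closed and take a sequence $(\mathcal{U}_n : n\in\omega)$ of open covers of $Y$. For each $n$, each $U\in\mathcal{U}_n$ has the form $U=V_U\cap Y$ for some open $V_U$ in $X$; set $\mathcal{W}_n=\{V_U : U\in\mathcal{U}_n\}$, a family of open sets in $X$. Applying set SSM to the subset $A=Y$ (noting $\overline{Y}=Y\subseteq\bigcup\mathcal{W}_n$ for every $n$, since $Y$ is closed), I get finite sets $F_n\subseteq\overline{A}=Y$ with $Y=A\subseteq\bigcup_n st(F_n,\mathcal{W}_n)$. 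The key routine point is that stars computed in $X$ against $\mathcal{W}_n$ intersect $Y$ exactly in the stars computed in $Y$ against $\mathcal{U}_n$: for $y\in Y$, $y\in st(F_n,\mathcal{W}_n)$ iff some $V_U$ with $V_U\cap F_n\neq\emptyset$ contains $y$, and since $F_n\subseteq Y$ this says $U=V_U\cap Y$ meets $F_n$ and contains $y$, i.e. $y\in st(F_n,\mathcal{U}_n)$. Hence $Y=\bigcup_n st(F_n,\mathcal{U}_n)$ with $F_n\subseteq Y$ finite, so $Y$ is SSM.

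For the converse, suppose every closed subspace of $X$ is SSM, and let $A\subseteq X$ be nonempty with a sequence $(\mathcal{U}_n : n\in\omega)$ of families of open sets of $X$ such that $\overline{A}\subseteq\bigcup\mathcal{U}_n$ for every $n$. The subspace $\overline{A}$ is closed, hence SSM. Applying the relative characterization (Proposition \ref{caratterizzazione}) inside $\overline{A}$ to the subset $A$ of $\overline{A}$ and to the sequence of traces $(\mathcal{U}_n\cap\overline{A} : n\in\omega)$—where each $\mathcal{U}_n\cap\overline{A}:=\{U\cap\overline{A} : U\in\mathcal{U}_n\}$ is a family of open sets of $\overline{A}$ covering $\overline{A}=\overline{A}^{\,\overline{A}}$—yields finite sets $F_n\subseteq\overline{A}$ with $A\subseteq\bigcup_n st(F_n,\mathcal{U}_n\cap\overline{A})$, where the stars are taken in $\overline{A}$. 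As above, for $x\in A$ and $F_n\subseteq\overline{A}$ one checks $x\in st(F_n,\mathcal{U}_n\cap\overline{A})$ (star in $\overline{A}$) iff $x\in st(F_n,\mathcal{U}_n)$ (star in $X$), because a set $U\cap\overline{A}$ meets $F_n$ iff $U$ does, and contains $x$ iff $U$ does. Therefore $A\subseteq\bigcup_n st(F_n,\mathcal{U}_n)$ with $F_n\subseteq\overline{A}$ finite, which is exactly set SSM for $X$.

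I do not anticipate a serious obstacle; the whole argument is bookkeeping about how stars behave under passing to a closed subspace and about the trace of an open family. The one point requiring a little care—and the reason Proposition \ref{caratterizzazione} is invoked rather than the bare definition of SSM—is that in the converse we need finite witnessing sets contained in $\overline{A}$ (not merely in $X$); the relative characterization of SSM inside the closed subspace $\overline{A}$ delivers exactly this, since "finite subset of $X$" there means "finite subset of $\overline{A}$" in the ambient space. Apart from that, everything reduces to the elementary identity $st(F,\mathcal{U})\cap Y = st(F,\{U\cap Y:U\in\mathcal{U}\})$ valid whenever $F\subseteq Y$.
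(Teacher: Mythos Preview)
Your proof is correct. The paper itself omits the argument entirely, labeling the result ``easy to check,'' so there is nothing to compare against; you have supplied exactly the routine verification the authors leave to the reader.

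One small simplification is available in the converse direction: you do not actually need Proposition~\ref{caratterizzazione}. Since $\overline{A}$ is SSM as a space in its own right and each trace family $\{U\cap\overline{A}:U\in\mathcal{U}_n\}$ is already an open \emph{cover} of $\overline{A}$ (because $\overline{A}\subseteq\bigcup\mathcal{U}_n$), the bare definition of SSM applied to the space $\overline{A}$ yields finite sets $F_n\subseteq\overline{A}$ with $\overline{A}=\bigcup_{n}st(F_n,\mathcal{U}_n\cap\overline{A})$, which in particular covers $A$. The clause ``finite subset of $X$'' in the definition of SSM automatically reads ``finite subset of $\overline{A}$'' here because $\overline{A}$ is the ambient space, so the delicate point you flagged about the location of the $F_n$ is already handled without invoking the relative characterization. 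Either route works; yours is just slightly longer than necessary.
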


The previous result is not true for set SM spaces as the following example shows.

\begin{example}\rm\label{ex ered}
A set SM space having a closed subspace which is not SM.
\end{example}
Consider the set SM space $X$ of Example \ref{product} below and its closed subspace $A$. Since $A$ is a discrete subspace of uncountable cardinality, it is not SM. $\hfill\triangle$

\smallskip

Recall that in \cite{S2} it is proved that the extent of a $T_1$ SSM space can be arbitrarily big. Also 

\begin{proposition}\rm \label{Sakai}\cite[Corollary 2.2]{Sakai}
	Every closed and discrete subspace of a regular SSM space has cardinality less than $\frak c$. Hence a SSM space has extent less or equal to $\frak c$.
\end{proposition}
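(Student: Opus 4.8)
The plan is to derive a contradiction from the existence of a closed discrete subspace of cardinality $\frak c$. Since every subset of a closed and discrete subspace is again closed and discrete, and since $e(X)$ is the supremum of the cardinalities of the closed discrete subspaces of $X$, it suffices to prove that a regular strongly star Menger space $X$ cannot contain a closed discrete $D$ with $|D|=\frak c$; the conclusions that every closed discrete $D$ has $|D|<\frak c$, and hence $e(X)\le\frak c$, follow at once. So suppose such a $D$ exists and fix a bijection $\varphi\colon D\to\omega^{\omega}$ onto the Baire space (recall $|\omega^{\omega}|=\frak c$). For each $d\in D$ pick, using regularity and with the care indicated below, an open $U_d\ni d$ with $\overline{U_d}\cap D=\{d\}$. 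The idea is to read off from any strongly-star-Menger selection against a suitable sequence $(\mathcal U_n)$ of open covers built from the $U_d$ a countable-length code of each point of $D$, lying in a ``Menger-type'' countable union of basic clopen cylinders of $\omega^{\omega}$; since $\omega^{\omega}$ is not $\sigma$-compact, a diagonal element of $\omega^{\omega}$ escapes that union, and being $\varphi(d)$ for some $d$ it gives the contradiction.

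In detail: for $n\in\omega$ and $s\in\omega^{n}$ set $D_s=\{d\in D:\varphi(d)\restriction n=s\}$ and $V_s=\bigcup\{U_d:d\in D_s\}$, so $V_s$ is open, $V_s\cap D=D_s$, and for each fixed $n$ the sets $D_s$ ($s\in\omega^{n}$) partition $D$. Put $\mathcal U_n=\{X\setminus D\}\cup\{V_s:s\in\omega^{n}\}$; this is an open cover of $X$ since $d\in V_{\varphi(d)\restriction n}$ for every $d$. Applying strong star Mengerness to the sequence $(\mathcal U_n:n\ge1)$ yields finite $F_n\subseteq X$ with $X=\bigcup_{n\ge1}st(F_n,\mathcal U_n)$. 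Given $d\in D$, choose $n\ge1$ with $d\in st(F_n,\mathcal U_n)$; since $d\notin X\setminus D$, some $V_s$ with $V_s\cap F_n\ne\emptyset$ contains $d$, and $d\in V_s\cap D=D_s$ forces $s=\varphi(d)\restriction n$. Thus $\varphi(d)\restriction n\in T_n$, where $T_n=\{s\in\omega^{n}:V_s\cap F_n\ne\emptyset\}$, and so $\omega^{\omega}=\varphi[D]\subseteq\bigcup_{n\ge1}\bigcup\{[s]:s\in T_n\}$ with $[s]=\{x\in\omega^{\omega}:x\restriction n=s\}$. If every $T_n$ is finite, we build $h\in\omega^{\omega}$ with $h\restriction n\notin T_n$ for all $n\ge1$: having chosen $h(0),\ldots,h(n-2)$, pick $h(n-1)$ distinct from the last coordinates of the finitely many members of $T_n$ extending $(h(0),\ldots,h(n-2))$. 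Then $h$ lies in no cylinder $[s]$, $s\in\bigcup_nT_n$, so $h\ne\varphi(d)$ for every $d\in D$, contradicting surjectivity of $\varphi$.

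The step I expect to be the main obstacle is exactly the one used above without justification: arranging that every $T_n$ be finite, equivalently that each $p\in X$ lie in only finitely many of the sets $V_s$ ($s\in\omega^{n}$), i.e.\ that each family $\{V_s:s\in\omega^{n}\}$ be point-finite. A closed and discrete set is automatically a locally finite subset, so each partition $\{D_s:s\in\omega^{n}\}$ of $D$ is a discrete family of closed sets, and what is required is a point-finite open expansion of such a family — simultaneously for all $n$, and compatibly with the inclusions $D_{s^{\frown}k}\subseteq D_s$. This is where regularity is essential and where the bookkeeping is delicate: rather than fixing $\varphi$ and the $U_d$ in advance, one builds them by a recursion in which, on passing from level $n$ to level $n+1$, each neighbourhood is shrunk — using regularity to push its closure off a prescribed closed set determined by the finitely- or countably-many separation demands active at that stage — so that no point of $X$ is ever spent on infinitely many blocks of a single level. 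Organizing these demands so that they can all be met is the technical heart of the argument; once it is done, the elementary diagonalization against $\omega^{\omega}$ above completes the proof. (Iterating only the weaker strongly star Lindel\"of property, and stopping once $D$ has been coded injectively into $\omega^{\omega}$, already gives $e(X)\le\frak c$; it is the finiteness — not merely the countability — of the selecting sets $F_n$ that sharpens the bound to $|D|<\frak c$.)
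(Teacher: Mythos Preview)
This proposition is not proved in the paper --- it is quoted from \cite[Corollary~2.2]{Sakai} without argument. So there is no ``paper's own proof'' to set beside yours; the nearest argument actually written out here is the proof of Theorem~\ref{extentsetSM}, the set~SM analogue.

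That said, the obstacle you yourself flag is a genuine gap, and your sketch does not close it. For your diagonalization to go through you need each family $\{V_s:s\in\omega^{n}\}$ to be point-finite in $X$; equivalently, you need a point-finite open expansion of the discrete closed family $\{D_s:s\in\omega^{n}\}$. In a merely regular space such expansions need not exist --- this is essentially a collectionwise-Hausdorff type of separation, strictly stronger than $T_3$. Your proposed recursive shrinking (``push closures off a closed set determined by finitely- or countably-many demands'') cannot succeed as described: at every level there are $\mathfrak c$ blocks and the requirement is global, so for a fixed $p\in X\setminus D$ nothing in your outline prevents $p\in U_d$ for infinitely many $d$ spread over infinitely many $D_s$. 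Regularity lets you separate a point from one closed set at a time; it does not manufacture point-finite open families indexed by $\mathfrak c$.

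The device used in the paper for Theorem~\ref{extentsetSM} (following Sakai) is different in kind and avoids this entirely. One fixes a size-$\mathfrak c$ family $\mathcal B$ of basic neighbourhoods with $\overline B\cap D=\{d_B\}$, enumerates a \emph{cofinal} family $\{(\mathcal B_{\alpha,n})_n:\alpha<\mathfrak c\}$ in the poset $([\mathcal B]^{<\omega})^{\omega}$ (which has cardinality $\mathfrak c$), and for each $\alpha$ first chooses $y_\alpha\in D\setminus\bigcup_n\overline{\bigcup\mathcal B_{\alpha,n}}$ and only then picks the neighbourhood $V_n(y_\alpha)\subseteq B$ disjoint from $\bigcup\mathcal B_{\alpha,n}$; the covers $\mathcal U_n$ consist of these $V_n(y_\alpha)$. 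Any finite selection from $\mathcal U_n$ is then dominated by some pre-listed $(\mathcal B_{\gamma,n})_n$, and $y_\gamma$ escapes all the stars. The finiteness of the selection is absorbed combinatorially into the cofinal enumeration, so no point-finiteness of open families is ever invoked, and regularity alone suffices. If you want to repair your argument, look for this kind of anticipatory diagonalization over $([\mathcal B]^{<\omega})^{\omega}$ rather than for point-finite open expansions.
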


It is well known that a CC space has countable extent. Since every CC space is set SSC (see \cite[Theorem 2.1.4]{vDRRT} and recall that CC property is hereditary with respect to closed sets) and every set SSL space has countable extent \cite[Proposition 3.1]{BMae}
we have that
$$\hbox{CC}\Rightarrow \hbox{set SSH}\Rightarrow \hbox{set SSM}\Rightarrow \hbox{countable extent}.$$

Note that the previous implications can not be reversed. Indeed,
of course, every Hurewicz non countably compact space is a set SSH non countably compact space: consider, for example, the discrete space $\omega$. For the converse of the other implications see Examples \ref{exampleSSH} and \ref{example}.

\smallskip

In \cite{MAT} it was shown that the extent of a Tychonoff SSL space can be arbitrarily large
	(note that in \cite{MAT} a SSL space is called a space with countable weak extent). In \cite{Sakai} the space constructed in \cite{MAT} was used to prove that the extent of a Tychonoff SM (in fact SC) space can be arbitrarily large.
Moreover in \cite{Sakai} the author shows the following
\begin{theorem}\rm\cite{Sakai}\label{Sakai}
	If $X$ is a regular SM space such that $w(X)=\frak c$, then every closed and discrete subspace of $X$ has cardinality less than $\frak c$. Hence, we have $e(X)\leq \frak c$.
\end{theorem}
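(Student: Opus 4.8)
I would prove this by contradiction. Suppose $D$ is a closed discrete subspace of $X$ with $|D|=\mathfrak{c}$. Since $w(X)=\mathfrak{c}$, fix a base $\mathcal{B}$ of $X$ with $|\mathcal{B}|=\mathfrak{c}$; by regularity and the fact that $D\setminus\{d\}$ is closed, choose for each $d\in D$ a set $B_d\in\mathcal{B}$ with $d\in B_d$ and $\overline{B_d}\cap D=\{d\}$. As $|D|=\mathfrak{c}=|\omega^\omega|$, I would enumerate $D=\{d_f:f\in\omega^\omega\}$ (the precise enumeration to be fixed during the construction below).

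The plan is to produce a sequence $(\mathcal{U}_n:n\in\omega)$ of open covers of $X$ witnessing the failure of the star-Menger selection principle. The intended form of $\mathcal{U}_n$ is $\{H^n_k:k\in\omega\}\cup\mathcal{C}_n$, where $H^n_k=\bigcup\{B_{d_g}:g\in\omega^\omega,\ g(n)=k\}$ and $\mathcal{C}_n\subseteq\mathcal{B}$ consists of base sets contained in $X\setminus D$ chosen so that $\mathcal{U}_n$ covers $X$. Two properties are required: (a) the only member of $\mathcal{U}_n$ that contains $d_f$ is $H^n_{f(n)}$ (which is automatic: $\overline{B_{d_g}}\cap D=\{d_g\}$ forces $d_f\in B_{d_g}\Rightarrow f=g$, and the members of $\mathcal{C}_n$ avoid $D$); and (b) for every finite $\mathcal{V}\subseteq\mathcal{U}_n$ the set $\phi_{\mathcal{V}}(n)=\{k\in\omega:H^n_k\cap\bigcup\mathcal{V}\neq\emptyset\}$ is finite. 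Given (a) and (b), the contradiction is a short slalom argument: if $(\mathcal{V}_n:n\in\omega)$ is any sequence with $\mathcal{V}_n\in[\mathcal{U}_n]^{<\omega}$, then by (a) one has $d_f\in st(\bigcup\mathcal{V}_n,\mathcal{U}_n)$ if and only if $f(n)\in\phi_{\mathcal{V}_n}(n)$, so choosing $f\in\omega^\omega$ with $f(n)\in\omega\setminus\phi_{\mathcal{V}_n}(n)$ for every $n$ — possible since each $\phi_{\mathcal{V}_n}(n)$ is finite by (b) — exhibits a point $d_f$ that lies in no $st(\bigcup\mathcal{V}_n,\mathcal{U}_n)$, contradicting $X=\bigcup_{n\in\omega}st(\bigcup\mathcal{V}_n,\mathcal{U}_n)$.

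The substance of the proof is therefore to secure (b), and this is exactly where regularity and $w(X)=\mathfrak{c}$ are both used essentially. The obstruction is that the neighbourhoods $B_{d_g}$ protrude into the open set $X\setminus D$, where they — and the members of $\mathcal{C}_n$ — may overlap uncontrollably, so that a single $H^n_k$ could meet all the others and (b) would fail outright; this cannot be repaired by merely separating $D$, since a closed discrete subspace of a regular space need not be separated even by a ``sparse'' family of open sets. I expect the route to be a transfinite recursion of length $\mathfrak{c}$ in which the $B_{d_g}$ are shrunk and the enumeration $f\mapsto d_f$ is built against an enumeration of the $\mathfrak{c}$-many possible ``finite responses'' (these are legitimate to enumerate precisely because $w(X)=\mathfrak{c}$ caps the supply of base sets), arranging (b) one threat at a time while keeping regularity available to shrink neighbourhoods at each step. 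Reconciling the $\omega$ stages, the $\mathfrak{c}$ threats, and the local structure of $X$ at the points of $D$ is the heavy part of the argument; the slalom diagonalisation above is routine, and so is the final observation that $e(X)=\sup\{|C|:C\text{ is closed discrete in }X\}\le\mathfrak{c}$ once every such $C$ is known to have cardinality $<\mathfrak{c}$.
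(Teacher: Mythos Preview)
This theorem is quoted from \cite{Sakai} and the paper does not reproduce its proof; however, the paper \emph{does} prove the set-SM analogue (Theorem~\ref{extentsetSM}) by what is essentially Sakai's argument, so that is the relevant comparison.

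Your proposal has a real gap: you correctly identify that condition~(b) is the whole difficulty, but you do not secure it, and the framework you have set up makes it essentially unobtainable. With $H^n_k=\bigcup\{B_{d_g}:g(n)=k\}$, each $H^n_k$ is a union of $\mathfrak{c}$-many basic open sets, and for (b) to hold already with $\mathcal{V}=\{H^n_{k_0}\}$ you need $H^n_{k_0}\cap H^n_k=\emptyset$ for all but finitely many $k$; unpacking this, it amounts to near-pairwise-disjointness of the family $\{B_d:d\in D\}$. As you yourself note, regularity does not give you a disjoint open expansion of a closed discrete set, and the transfinite recursion you sketch (``shrink the $B_{d_g}$ one threat at a time'') does not help: shrinking individual $B_{d_g}$'s does not control the intersection of two \emph{unions} of $\mathfrak{c}$-many of them, and there is no way to handle the threats locally. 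The slalom picture is simply the wrong combinatorial shape for this problem.

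The argument the paper gives (for the set version, and presumably Sakai's original) avoids the obstacle by reversing the order of quantifiers: instead of building covers first and then diagonalising, one first enumerates all possible ``finite responses'' and only then builds the covers. Concretely, one takes a cofinal family $\{(\mathcal{B}_{\alpha,n})_{n\in\omega}:\alpha<\mathfrak{c}\}$ in the poset $\mathbb{P}=([\mathcal{B}]^{<\omega})^\omega$ ordered by coordinatewise inclusion (this is where $|\mathcal{B}|=\mathfrak{c}$, hence $w(X)=\mathfrak{c}$, is used), then for each $\alpha$ picks $y_\alpha\in D\setminus\bigcup_{n}\overline{\bigcup\mathcal{B}_{\alpha,n}}$ and a neighbourhood $V_n(y_\alpha)\subseteq B_{y_\alpha}$ disjoint from $\bigcup\mathcal{B}_{\alpha,n}$. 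The covers $\mathcal{U}_n=\{V_n(y_\alpha):\alpha<\mathfrak{c}\}$ (together with $\mathcal{B}$ to cover the rest of $X$ in the SM case) then kill every $(\mathcal{V}_n)$: since each member of $\mathcal{U}_n$ sits inside some member of $\mathcal{B}$, any finite $\mathcal{V}_n$ is absorbed into some $\mathcal{B}'_n\in[\mathcal{B}]^{<\omega}$, cofinality supplies $\gamma$ with $\mathcal{B}'_n\subseteq\mathcal{B}_{\gamma,n}$ for all $n$, and then $V_n(y_\gamma)\cap\bigcup\mathcal{V}_n=\emptyset$ while $V_n(y_\gamma)$ is the unique member of $\mathcal{U}_n$ containing $y_\gamma$. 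No disjointness among the $B_d$ is needed---only that each $\overline{B_d}$ meets $D$ in a single point, which regularity gives for free.
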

Now we show that 	the extent of a regular set SM space cannot exceed $\frak c$.

\begin{theorem}\rm\label{extentsetSM}\label{Sakai3}
	If $X$ is a regular set SM space, then every closed and discrete subspace of $X$ has cardinality less than $\frak c$. Hence, we have $e(X)\leq \frak c$.
\end{theorem}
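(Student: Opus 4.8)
The plan is to argue by contradiction and to exploit the crucial feature of the \emph{set} version of star‑Mengerness: it only forces the chosen finite subfamilies to star‑cover a prescribed set $A$, and the covers in play only need to cover $\overline A$, not all of $X$. So suppose $X$ is a regular set SM space and that it has a closed discrete subspace of cardinality $\geq \mathfrak c$. Inside it fix a set $D=\{x_f:f\in\omega^\omega\}$ of cardinality exactly $\mathfrak c$, indexed by $\omega^\omega$. Since a subset of a closed discrete set is itself closed, $\overline D=D$. I would then apply set SM to the subset $A=D$ together with a carefully built sequence $(\mathcal U_n:n\in\omega)$ of families of open subsets of $X$, each covering $\overline D=D$, chosen so that no legal selection $(\mathcal V_n)$ can satisfy $D\subseteq\bigcup_{n\in\omega}st(\bigcup\mathcal V_n,\mathcal U_n)$. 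Once this is done, the contradiction is immediate, and since the hypothetical closed discrete set was arbitrary we get $e(X)=\sup\{|E|:E\text{ closed discrete}\}\le\mathfrak c$.

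For the construction I would use regularity to separate the points of $D$ one at a time: for each $n$ and each $f\in\omega^\omega$ pick an open $U_f^{\,n}$ with $x_f\in U_f^{\,n}$, with $\overline{U_f^{\,n}}\cap D=\{x_f\}$, and arranged so that $\overline{U_f^{\,n}}$ also misses the closed set $\{x_g:g(n)<f(n)\}$; put $\mathcal U_n=\{U_f^{\,n}:f\in\omega^\omega\}$, a cover of $D$. The property I want to extract from such a choice is a \emph{coordinatewise boundedness}: for every finite $\mathcal V\subseteq\mathcal U_n$ there is $m\in\omega$ with $st(\bigcup\mathcal V,\mathcal U_n)\cap D\subseteq\{x_f:f(n)\le m\}$. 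Granting this, set SM applied to $A=D$ and $(\mathcal U_n)$ yields finite $\mathcal V_n\subseteq\mathcal U_n$ with $D\subseteq\bigcup_{n}st(\bigcup\mathcal V_n,\mathcal U_n)$; choosing $m_n$ as above for each $n$ and letting $f^\ast\in\omega^\omega$ be $f^\ast(n)=m_n+1$, we get $x_{f^\ast}\in D$ but $x_{f^\ast}\notin st(\bigcup\mathcal V_n,\mathcal U_n)$ for every $n$, a contradiction. (A variant that sidesteps the exact coordinate bound: build $\mathcal U_n$ so that each $st(U,\mathcal U_n)\cap D$ has cardinality $<\mathfrak c$; since $st(\bigcup\mathcal V,\mathcal U_n)\subseteq\bigcup_{U\in\mathcal V}st(U,\mathcal U_n)$ the same holds for finite $\mathcal V$, and because $\mathrm{cf}(\mathfrak c)>\omega$ the countable union $\bigcup_n st(\bigcup\mathcal V_n,\mathcal U_n)$ then has cardinality $<\mathfrak c=|D|$ and cannot exhaust $D$.)

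The main obstacle is precisely the verification that families $\mathcal U_n$ with the required boundedness (respectively, smallness‑of‑stars) can be built: because $X$ need not be normal, two neighbourhoods $U_f^{\,n}$ and $U_g^{\,n}$ with $f(n)$ much smaller than $g(n)$ may still meet at a point outside $D$, so controlling $st(\bigcup\mathcal V,\mathcal U_n)$ calls for a delicate, possibly transfinite, choice of the neighbourhoods — this is the analogue of the sensitive step in Sakai's argument for the theorem quoted just above, and it is exactly here that the \emph{set} hypothesis earns its keep: one may work with covers of $\overline D=D$ only, without any control on the weight of $X$. (That the hypothesis is genuinely needed is clear, since plain SM does not bound the extent, by the examples of Tychonoff SM spaces of arbitrarily large extent recalled above.) If one prefers to cite that theorem of Sakai as a black box, the alternative route is to use the $\mathfrak c$‑many open sets $U_f^{\,n}$ (together with $X\setminus D$) to carve out, by a recursion of length $\mathfrak c$, a regular SM subspace $Y\supseteq D$ with $w(Y)\le\mathfrak c$ in which $D$ is still closed and discrete, contradicting that theorem; the bookkeeping for this recursion is the same difficulty described above.
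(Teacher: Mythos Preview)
Your overall strategy coincides with the paper's: assume a closed discrete set of size $\mathfrak c$, apply set SM to it, and build a sequence of open families covering it for which no sequence of finite subfamilies can star--cover it. But the proposal stops precisely at the decisive step. The construction you describe for $U_f^{\,n}$ does \emph{not} give the claimed coordinatewise boundedness: from $\overline{U_f^{\,n}}\cap\{x_g:g(n)<f(n)\}=\emptyset$ you only know that $U_f^{\,n}$ misses those \emph{points} $x_g$, not their neighbourhoods $U_g^{\,n}$, so for $h$ with $h(n)$ large nothing prevents $U_h^{\,n}$ from meeting $U_{g_i}^{\,n}$ off $D$, and $st(\bigcup\mathcal V,\mathcal U_n)\cap D$ need not be bounded in the $n$-th coordinate. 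You recognise this yourself (``the main obstacle''), but neither the ``small stars'' variant nor the ``carve out a subspace of weight $\le\mathfrak c$ and quote Sakai'' route is actually carried out; both are restatements of the difficulty, not solutions to it.

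The missing idea, which the paper supplies, is to diagonalise in advance against \emph{all} possible selections. One fixes once and for all a family $\mathcal B$ of open sets with $|\mathcal B|=\mathfrak c$, one for each $y$ in the closed discrete set $Y$, satisfying $y\in B$ and $\overline B\cap Y=\{y\}$. Then $\mathbb P=([\mathcal B]^{<\omega})^\omega$, ordered by coordinatewise inclusion, has cardinality $\mathfrak c^\omega=\mathfrak c$, so one may list a cofinal family $\{(\mathcal B_{\alpha,n})_{n}:\alpha<\mathfrak c\}$ in $\mathbb P$. For each $\alpha$ the set $\bigcup_n\overline{\bigcup\mathcal B_{\alpha,n}}$ meets $Y$ in a countable set, so one picks pairwise distinct $y_\alpha\in Y$ outside it and shrinks a member of $\mathcal B$ around $y_\alpha$ to an open $V_n(y_\alpha)$ disjoint from $\bigcup\mathcal B_{\alpha,n}$. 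With $Z=\{y_\alpha:\alpha<\mathfrak c\}$ and $\mathcal U_n=\{V_n(y_\alpha):\alpha<\mathfrak c\}$, any finite $\mathcal V_n\subseteq\mathcal U_n$ has each member contained in some member of $\mathcal B$; the resulting sequence of finite ``$\mathcal B$--hulls'' is dominated by some $(\mathcal B_{\gamma,n})_n$, and since $V_n(y_\gamma)$ is the unique element of $\mathcal U_n$ containing $y_\gamma$ and is disjoint from $\bigcup\mathcal B_{\gamma,n}\supseteq\bigcup\mathcal V_n$, the point $y_\gamma$ is missed by every $st(\bigcup\mathcal V_n,\mathcal U_n)$. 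The insight you correctly articulated --- that under the set hypothesis one only needs covers of the closed discrete set, hence only a family $\mathcal B$ of size $\mathfrak c$ irrespective of $w(X)$ --- is exactly what makes $|\mathbb P|=\mathfrak c$ and this diagonalisation possible; what your proposal lacks is this cofinal--family argument replacing the unworkable coordinatewise control.
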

\begin{proof} Fix $Y$ a closed and discrete subspace of $X$ and assume $|Y|=\frak c$. Consider a family $\cal B$ of open subsets of $X$ such that for every $y\in Y$ there exists $B\in{\cal B}$ such that $y\in B$ and $\overline{B}\cap Y=\{y\}$ and suppose that $|\cal B|=\frak c$. Denote by $[\cal B]^{<\omega}$ the family of all finite subsets of $\cal B$, by $\mathbb P=([\cal B]^{<\omega})^{\omega}$ the family of all the sequences of elements of $[\cal B]^{<\omega}$ and introduce on $\mathbb{P}$ the partial order "$\leq$" defined as follows: if
$({\cal B}_n')_{n\in\omega},({\cal B}_n'')_{n\in\omega}\in {\mathbb P} \hbox{ then  }({\cal B}_n')_{n\in\omega} \leq ({\cal B}_n'')_{n\in\omega} \hbox{  means  }{\cal B}_n'\subseteq {\cal B}_n'' \hbox{  for every  } n\in\omega$. Let $\{({\cal B}_{\alpha,n})_{n\in\omega}: \alpha<\frak c\}$ be a cofinal family in $(\mathbb P, \leq)$. Take $Z=\{y_\alpha: \alpha <\frak c\}$ by choosing for every $\alpha<\frak c$ a point $y_\alpha \in Y\setminus \bigcup_{n\in\omega}\overline{\bigcup {\cal B}_{\alpha,n}}$ and $y_\alpha\not=y_\beta$ for $\alpha\not=\beta$. For every $\alpha < \frak c$ let $\{V_n(y_\alpha): n\in\omega\}$ be a sequence of open neighbourhoods of $y_\alpha$ such that $V_n(y_\alpha)\subseteq B$ for some $B\in {\cal B}$ and every $n\in\omega$ and

$V_n(y_\alpha)\cap \bigcup{\cal B}_{\alpha,n}=\emptyset$ for every $n\in\omega$. For every $n\in\omega$ put ${\cal U}_n=\{V_n(y_\alpha): \alpha <\frak c\}$. Clearly $ Z=\overline{Z}\subseteq \bigcup {\cal U}_n$ for every $n\in\omega$. We will show that the subset $Z$ and the sequence $({\cal U}_n: n\in\omega)$ do not satisfy the set SM property. Let $({\cal V}_n: n\in\omega ) $ be any sequence of finite subsets of ${\cal U}_n$ for every $n\in\omega$. Let $({\cal B}_n': n\in\omega)\in \mathbb P$ such that every member of ${\cal V}_n$ is contained in a member of ${\cal B}_n'$. Since $\{({\cal B}_{\alpha,n})_{n\in\omega}: \alpha<\frak c\}$ is a cofinal family in $\mathbb P$, there exists $\gamma <\frak c$ such that ${\cal B}_n'\subseteq {\cal B}_{\gamma,n}$ for every $n\in\omega$. Then $V_n(y_\gamma) \cap \bigcup {\cal V}_n \subseteq V_n(y_\gamma) \cap \bigcup {\cal B}_{\gamma,n}=\emptyset$ for every $n\in \omega$. Since $V_n(y_\gamma)$ is the only member of ${\cal U}_n$ containing $y_\gamma$, we have $y_\gamma\not\in\bigcup_{n\in\omega}st(\bigcup {\cal V}_n,{\cal U}_n)$.
\end{proof} 

\smallskip

Example \ref{consistent} below gives a consistent example of a SSM not set SSM space. In fact, such an example was already described in \cite{KKS-MS}; here we show that it can be easily obtained from the next characterization and from the fact that set SSM spaces 
have countable extent.

\begin{theorem}\rm \cite{BMat}\label{BoMat}
	The following are equivalent:
	\begin{itemize}
		\item[(i)] $\Psi({\cal A})$ is SSM
		\item[(ii)] $|{\cal A}| < \mathfrak{d}$.
	\end{itemize}
\end{theorem}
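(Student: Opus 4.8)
The plan is to prove both implications directly, each time reducing matters to a statement about (non)dominating families in $(\omega^\omega,\le^*)$, and exploiting the special feature of a $\Psi$-space that the only member of a cover that can contain a given $a\in{\mathcal A}$ is $a$'s designated basic neighbourhood, while every neighbourhood of $a$ contains cofinitely many points of $a\subseteq\omega$.

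For the implication (ii)$\Rightarrow$(i), assume $|{\mathcal A}|<\mathfrak d$ and let $({\mathcal U}_n:n\in\omega)$ be a sequence of open covers of $\Psi({\mathcal A})$. For each $a\in{\mathcal A}$ and each $n$ I would choose $U^n_a\in{\mathcal U}_n$ with $a\in U^n_a$; since $U^n_a$ is open and contains the point $a\in{\mathcal A}$ it contains a basic neighbourhood $\{a\}\cup(a\setminus F)$, so $U^n_a\cap a$ is infinite and I can set $\psi_a(n)=\min(U^n_a\cap a)\in\omega$. The family $\{\psi_a:a\in{\mathcal A}\}$ has cardinality $\le|{\mathcal A}|<\mathfrak d$, hence is not dominating, so there is $g\in\omega^\omega$ — which I may take strictly increasing with $g(n)\ge n$ — such that $\{n:g(n)>\psi_a(n)\}$ is infinite for every $a$. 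Put $F_n=\{0,1,\dots,g(n)\}$. Then each isolated point $k$ lies in $F_k$ and in some member of ${\mathcal U}_k$, so $k\in st(F_k,{\mathcal U}_k)$; and for $a\in{\mathcal A}$, choosing $n$ with $g(n)>\psi_a(n)$ gives $\psi_a(n)\in F_n\cap U^n_a$, whence $a\in st(F_n,{\mathcal U}_n)$. Thus $\Psi({\mathcal A})=\bigcup_n st(F_n,{\mathcal U}_n)$, i.e. $\Psi({\mathcal A})$ is SSM.

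For (i)$\Rightarrow$(ii) I would argue the contrapositive: if $|{\mathcal A}|\ge\mathfrak d$ I produce one sequence of open covers that no sequence of finite sets stars over. Using the remark cited in the excerpt that $\mathfrak d$ is unchanged when $\le^*$ is replaced by $\le$ — together with the trivial strengthening from $\le$ to strict $<$ at every coordinate (pass from $d$ to $d+1$) — fix a family $D=\{d_\gamma:\gamma<\mathfrak d\}$ of strictly increasing functions with $d_\gamma(0)\ge 1$ such that for every $g\in\omega^\omega$ there is $\gamma$ with $g(n)<d_\gamma(n)$ for \emph{all} $n$. Since $|{\mathcal A}|\ge\mathfrak d\ge\aleph_1$ (so $|{\mathcal A}|={\mathfrak d}\cdot|{\mathcal A}|$), choose $\beta:{\mathcal A}\to\mathfrak d$ with every fibre $\beta^{-1}(\gamma)$ uncountable, and write $d_a=d_{\beta(a)}$. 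Let ${\mathcal U}_n=\{\{k\}:k\in\omega\}\cup\{\,\{a\}\cup\{m\in a:m>d_a(n)\}:a\in{\mathcal A}\,\}$; these are open covers of $\Psi({\mathcal A})$. Given any sequence $(F_n)$ of finite subsets of $\Psi({\mathcal A})$, set $C=\bigcup_n(F_n\cap{\mathcal A})$ (countable) and $g(n)=\max((F_n\cap\omega)\cup\{0\})$. Pick $\gamma$ with $d_\gamma(n)>g(n)$ for all $n$, then pick $a^*\in\beta^{-1}(\gamma)\setminus C$. The only member of ${\mathcal U}_n$ containing $a^*$ is $\{a^*\}\cup\{m\in a^*:m>d_\gamma(n)\}$, and every element of $F_n\cap\omega$ is $\le g(n)<d_\gamma(n)$, so this set misses $F_n$ for every $n$; since also $a^*\notin F_n$, we get $a^*\notin\bigcup_n st(F_n,{\mathcal U}_n)$. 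Hence $\Psi({\mathcal A})$ is not SSM.

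The step I expect to be the real obstacle is precisely the construction in the last paragraph. With the naive choice of covers that at level $n$ delete the first $d_a(n)$ elements of $a$ (or all elements of $a$ below $d_a(n)$), an ordinary dominating family controls $d_\gamma$ only on a tail, and the adversary's finite sets $F_n$ — which are allowed to be arbitrarily large at any \emph{fixed} level — could still catch the escaping point $a^*$ at one of the finitely many exceptional early levels; moreover one must make $a^*$ avoid the countable set $C$ of ${\mathcal A}$-points that the $F_n$'s capture outright. Both difficulties are resolved above by (a) working with a family that dominates strictly at \emph{every} coordinate, which costs nothing in cardinality by the $\le/\le^*$ remark and eliminates the exceptional levels entirely, and (b) arranging the coding $\beta$ with uncountable fibres, so that after deleting the countable $C$ an admissible $a^*$ still survives in the required fibre. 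The remaining verifications — that each ${\mathcal U}_n$ is an open cover and that in $\Psi({\mathcal A})$ no other cover element can contain $a^*$ — are routine.
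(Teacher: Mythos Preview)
The paper does not supply its own proof of this theorem; it is quoted verbatim from \cite{BMat} and used as a black box (the very next line in the paper is Example~\ref{consistent}, which merely applies the theorem). So there is nothing in the present paper to compare your argument against.

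That said, your argument is correct in both directions. The (ii)$\Rightarrow$(i) half is the standard reduction: from each cover extract a function $\psi_a$ recording where the chosen neighbourhood of $a$ first meets $a$, use non-dominance to find $g$, and let $F_n=\{0,\dots,g(n)\}$; your adjustment $g(n)\ge n$ is harmless and handles the isolated points. For (i)$\Rightarrow$(ii) your two devices---passing to a family that dominates everywhere in the strict sense (legitimate by the $\le/\le^*$ remark in the introduction) and arranging the coding $\beta$ to have uncountable fibres---cleanly dispose of exactly the two obstructions you identify, and the cardinal arithmetic $|{\mathcal A}|=\mathfrak d\cdot|{\mathcal A}|$ you invoke is valid since $\mathfrak d\ge\aleph_1$. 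The verification that the unique member of ${\mathcal U}_n$ containing $a^*$ misses $F_n$ is complete: elements of $F_n\cap\omega$ lie below $d_\gamma(n)$, elements of $F_n\cap{\mathcal A}$ other than $a^*$ are simply not in that basic neighbourhood, and $a^*\notin F_n$ by choice.
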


\begin{example}\label{consistent}
	\rm \cite{KKS-MS} ($\omega_1<\frak d$) There exists a SSM not set SSM space.
\end{example}

Assume $\omega_1<\frak d$ and consider $\Psi({\cal A})$ with $|{\cal A}| = \omega_1$. 
By Theorem \ref{BoMat} and since $e(\Psi({\cal A}))>\omega$, we have that $\Psi({\cal A})$ is a SSM not set SSM space. $\hfill\triangle$

\begin{question}\rm Does there exist a ZFC example of a SSM not set SSM space?  
\end{question}

Using Theorem \ref{Sakai3} we can give a Tychonoff space distinguishing SM and set SM properties. In fact, the following example distinguishes SCness and set SCness too.
 
\begin{example}\label{EXAMPLE}
	\rm A Tychonoff SC (hence SM) space which is not set SM (hence not set SC).
\end{example} 
In \cite{MAT}, for each infinite cardinal $\tau$ the following space $X(\tau)$ was considered. Let $Z= \{f_{\alpha} : \alpha < {\frak \tau} \}$ 
	where $f_\alpha$ denotes the points in $2^{\tau}$ with only the $\alpha$th coordinate equal to 1.
	Consider the set $$X(\tau)=(2^{\frak \tau}\times ({\tau}^+ +1))\setminus ((2^{\tau}\setminus Z)\times \{{\tau}^+\})$$
	with the topology inherited from the product topology on $2^{\tau}\times ({\tau}^++1)$.
	Denote $X_0=2^{\tau}\times {\tau}^+$ and $X_1=Z\times{\{\tau}^+\}$. Then $X(\tau)=X_0\cup X_1$. $X_1$ is a closed and discrete subspace of $X(\tau)$ of cardinality $\tau$. So the extent of $X(\tau)$ is $\tau$.\\
	In \cite{Sakai} it is proven that the space $X(\mathfrak{c})$ is SC (hence SM). By Theorem \ref{Sakai3}, $X(\mathfrak{c})$ it is not set SM. $\hfill\triangle$ 

\bigskip

Recall the following result:

\begin{proposition}
 	\rm \cite{Sakai} \label{Sakai1} Every SL (SSL) space of cardinality less than $\frak d$ is SM (SSM).
 \end{proposition}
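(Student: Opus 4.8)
The plan is to run the standard ``small sets are Menger'' argument, transferred to the star setting and using the $\leq$-reformulation of $\frak d$ recalled in the Introduction. I will describe the SL $\Rightarrow$ SM implication; the SSL $\Rightarrow$ SSM one is obtained by the same proof, replacing finite subfamilies of the covers by finite subsets of $X$.

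First I would fix a sequence $({\mathcal U}_n : n\in\omega)$ of open covers of $X$. Since $X$ is SL, for each $n$ choose a countable ${\mathcal W}_n=\{W_{n,k}:k\in\omega\}\subseteq{\mathcal U}_n$ (listing its members with repetitions if ${\mathcal W}_n$ happens to be finite) such that $st(\bigcup{\mathcal W}_n,{\mathcal U}_n)=X$. For $x\in X$ and $n\in\omega$ we then have $st(x,{\mathcal U}_n)\cap\bigcup{\mathcal W}_n\neq\emptyset$, so the set $\{k\in\omega: st(x,{\mathcal U}_n)\cap W_{n,k}\neq\emptyset\}$ is nonempty; define $f_x\in\omega^\omega$ by letting $f_x(n)$ be its least element.

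Now I would invoke $|X|<\frak d$. As $\frak d$ is unchanged when computed with the relation $\leq$ in place of $\leq^*$, the family $\{f_x:x\in X\}$ has cardinality $<\frak d$ and hence is not $\leq$-dominating: there is $g\in\omega^\omega$ with $g\not\leq f_x$ for every $x\in X$, that is, for each $x\in X$ there is $n=n(x)$ with $f_x(n)<g(n)$. Put ${\mathcal V}_n=\{W_{n,j}: j<g(n)\}$, a finite subfamily of ${\mathcal U}_n$. Given $x\in X$ and $n=n(x)$, the index $k=f_x(n)$ satisfies $k<g(n)$, so $W_{n,k}\in{\mathcal V}_n$ and the set $st(x,{\mathcal U}_n)$ meets $W_{n,k}$; hence there is $U\in{\mathcal U}_n$ with $x\in U$ and $U\cap W_{n,k}\neq\emptyset$, so $x\in st(\bigcup{\mathcal V}_n,{\mathcal U}_n)$. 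Therefore $X=\bigcup_{n\in\omega}st(\bigcup{\mathcal V}_n,{\mathcal U}_n)$, proving $X$ is SM. For SSL $\Rightarrow$ SSM one proceeds identically, taking a countable $C_n=\{c_{n,k}:k\in\omega\}\subseteq X$ with $st(C_n,{\mathcal U}_n)=X$, setting $f_x(n)=\min\{k: x\in st(c_{n,k},{\mathcal U}_n)\}$ (such $k$ exists because $x\in st(C_n,{\mathcal U}_n)$ forces $st(x,{\mathcal U}_n)\cap C_n\neq\emptyset$, hence $c_{n,k}\in st(x,{\mathcal U}_n)$, i.e. $x\in st(c_{n,k},{\mathcal U}_n)$, for some $k$), and finishing with $F_n=\{c_{n,j}: j<g(n)\}$.

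The only delicate point is the choice of $g$: one cannot expect $\{f_x:x\in X\}$ to be bounded (that would require $|X|<\frak b$), so it is essential to use instead that a family of size $<\frak d$ fails to be $\leq$-dominating. This is the step I regard as the heart of the matter, even though it is immediate from the fact quoted in the Introduction; everything else is bookkeeping with the elementary equivalence $c\in st(x,{\mathcal U})\iff x\in st(c,{\mathcal U})$ already used in the proof of Proposition \ref{caratterizzazione}.
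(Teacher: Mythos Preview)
Your proof is correct and follows exactly the argument the paper itself employs; note that the paper does not prove Proposition~\ref{Sakai1} (it is quoted from \cite{Sakai}), but the very next result, Proposition~\ref{setSL implys setSM}, is proved by precisely your method---choose countable witnesses $\{V_{n,m}\}$, code each point by $f_x\in\omega^\omega$, use $|X|<\frak d$ to find $g$ not $\leq$-dominated by the $f_x$, and take $\{V_{n,j}:j\leq g(n)\}$. The only cosmetic difference is that the paper writes $j\leq g(n)$ where you write $j<g(n)$, which is immaterial.
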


Now we prove that the set versions of the previous proposition holds.

\begin{proposition}\label{setSL implys setSM}
 	\rm Every set SL space of cardinality less than $\frak d$ is set SM.
\end{proposition}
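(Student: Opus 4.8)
The plan is to adapt, essentially verbatim, the proof of Proposition~\ref{Sakai1} (``every SL space of cardinality less than $\frak d$ is SM'') to the relative setting, replacing ``cover of $X$'' by ``family covering $\overline A$'' throughout. So let $X$ be a set SL space with $|X|<\frak d$, fix a nonempty $A\subseteq X$, and let $(\mathcal U_n:n\in\omega)$ be a sequence of families of open subsets of $X$ with $\overline A\subseteq\bigcup\mathcal U_n$ for every $n$. First I would apply the set SL property to $A$ together with each $\mathcal U_n$ separately, obtaining for every $n$ a countable subfamily $\mathcal W_n=\{W_{n,k}:k\in\omega\}\subseteq\mathcal U_n$ with $A\subseteq st(\bigcup\mathcal W_n,\mathcal U_n)$.

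Next, for each $x\in A$ and each $n\in\omega$, note that $x\in A\subseteq\bigcup\mathcal U_n$, so $x$ lies in some member of $\mathcal U_n$, and since $x\in st(\bigcup\mathcal W_n,\mathcal U_n)$ that member can be taken to meet $\bigcup\mathcal W_n$, hence to meet some $W_{n,k}$; therefore $st(x,\mathcal U_n)\cap W_{n,k}\neq\emptyset$ for some $k$. Define $f_x\in\omega^\omega$ by $f_x(n)=\min\{k\in\omega: st(x,\mathcal U_n)\cap W_{n,k}\neq\emptyset\}$, which is well defined by the previous sentence. The family $\{f_x:x\in A\}$ has cardinality at most $|A|\le|X|<\frak d$, so it is not dominating in $(\omega^\omega,\le^*)$; hence there is $g\in\omega^\omega$ such that for every $x\in A$ the set $\{n\in\omega: g(n)>f_x(n)\}$ is infinite.

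Finally, for each $n$ put $\mathcal V_n=\{W_{n,k}:k\le g(n)\}$, a finite subfamily of $\mathcal U_n$. Given $x\in A$, choose any $n$ with $f_x(n)<g(n)$; then $W_{n,f_x(n)}\in\mathcal V_n$ and $st(x,\mathcal U_n)$ meets $W_{n,f_x(n)}\subseteq\bigcup\mathcal V_n$, so $x\in st(\bigcup\mathcal V_n,\mathcal U_n)$. Thus $A\subseteq\bigcup_{n\in\omega}st(\bigcup\mathcal V_n,\mathcal U_n)$, which is exactly what set SM requires.

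I do not expect a genuine obstacle here; the proof is routine once the bookkeeping is set up. The one point to watch is that $\mathcal U_n$ need not be a cover of all of $X$, only of $\overline A$ — but the whole argument refers exclusively to stars of points of $A$ (all of which do lie in $\bigcup\mathcal U_n$), so this never matters, and in particular, unlike in Proposition~\ref{caratterizzazione}, there is no need to enlarge $\mathcal U_n$ to a cover of $X$ since the set SL property already applies to families covering $\overline A$ only. If one prefers, the domination step can be phrased with $\le$ instead of $\le^*$, which is harmless since the value of $\frak d$ is unchanged by \cite[Theorem 3.6]{vD}.
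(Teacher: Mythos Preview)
Your proof is correct and follows essentially the same argument as the paper: apply set SL to each $\mathcal U_n$ to get a countable enumerated subfamily, code for each $x\in A$ a function $f_x\in\omega^\omega$ recording an index witnessing $x\in st(\bigcup\mathcal W_n,\mathcal U_n)$, use $|A|<\frak d$ to find a $g$ not dominated by the $f_x$'s, and take $\mathcal V_n=\{W_{n,k}:k\le g(n)\}$. The only cosmetic differences are notation (the paper swaps the roles of $\mathcal V_n$ and $\mathcal W_n$) and the choice of order ($\le$ versus $\le^*$), which, as you note, is immaterial by \cite[Theorem~3.6]{vD}.
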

\begin{proof}
	Let $X$ be a set SL space of cardinality less than $\frak d$. Let $A\subseteq X$ and $({\cal U}_n\,\,:\,\, n\in\omega)$ be a sequence of families of open sets of $X$ such that $\overline{A}\subseteq \bigcup {\cal U}_n$ for every $n\in\omega$. For every $n\in \omega $ there is a countable subfamily ${\cal V}_n=\{V_{n,m}\,\,:\,\,m\in \omega\}$ of ${\cal U}_n$ such that $A\subseteq st(\bigcup {\cal V}_n, {\cal U}_n)$. For every $x\in A$ we choose a function $f_x\in\omega^\omega$ such that $st(x,{\cal U}_n)\cap V_{n, f_x(n)}\not= \emptyset$ for all $n\in\omega$. Since $\{f_x\,\,:\,\,x\in A\}$  is not a cofinal family in $(\omega^\omega, \leq)$, there are some $g\in \omega^\omega$ and $n_x\in\omega$ for $x\in A$ such that $f_x(n_x)<g(n_x)$. Let ${\cal W}_n=\{V_{n,j}\,\,:\,\,j\leq g(n)\}$. Then $A\subseteq \bigcup_{n\in\omega}st(\bigcup {\cal W}_n, {\cal U}_n)$.
\end{proof}

In a similar way we can prove the following

\begin{proposition}\label{frakd}
	\rm Every set SSL space of cardinality less than $\frak d$ is set SSM.
\end{proposition}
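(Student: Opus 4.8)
The plan is to run the argument of Proposition \ref{setSL implys setSM} essentially verbatim, replacing the finite subfamilies of the $\mathcal U_n$ by finite subsets of $\overline A$. So I would fix a set SSL space $X$ with $|X|<\frak d$, a nonempty $A\subseteq X$, and a sequence $(\mathcal U_n:n\in\omega)$ of families of open subsets of $X$ with $\overline A\subseteq\bigcup\mathcal U_n$ for every $n\in\omega$. Applying the set SSL property to $A$ together with each $\mathcal U_n$ separately, I first obtain for every $n\in\omega$ a countable set $C_n=\{x_{n,m}:m\in\omega\}\subseteq\overline A$ with $A\subseteq st(C_n,\mathcal U_n)$.

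Next, for each $x\in A$ I would choose a function $f_x\in\omega^\omega$ with $x_{n,f_x(n)}\in st(x,\mathcal U_n)$ for every $n\in\omega$: this is possible since $x\in st(C_n,\mathcal U_n)$ provides some $U\in\mathcal U_n$ with $x\in U$ and $U\cap C_n\neq\emptyset$, and then any $x_{n,m}\in U$ satisfies $x_{n,m}\in U\subseteq st(x,\mathcal U_n)$. Because $|\{f_x:x\in A\}|\le|X|<\frak d$, this family is not cofinal in $(\omega^\omega,\le)$ — here I use that $\frak d$ is unchanged when computed with $\le$ in place of $\le^*$ \cite[Theorem 3.6]{vD} — so there is $g\in\omega^\omega$ such that every $x\in A$ admits $n_x\in\omega$ with $f_x(n_x)<g(n_x)$. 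Setting $F_n=\{x_{n,j}:j\le g(n)\}$, which is a finite subset of $C_n$ and hence of $\overline A$, I would check the (routine) fact that for each $x\in A$ the point $x_{n_x,f_x(n_x)}$ lies in $F_{n_x}\cap st(x,\mathcal U_{n_x})$, so that $x\in st(F_{n_x},\mathcal U_{n_x})$; therefore $A\subseteq\bigcup_{n\in\omega}st(F_n,\mathcal U_n)$, which is exactly what set SSM asks for.

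I do not expect any real obstacle here; the proof is a routine adaptation of the one just given. The single point that deserves a moment's attention is that the countable witnesses $C_n$ delivered by the set SSL hypothesis already lie inside $\overline A$, so their finite truncations $F_n$ automatically do too, matching the requirement in the definition of set SSM that the finite sets be subsets of $\overline A$. (Alternatively, one could deduce the statement from Proposition \ref{ered}-style reasoning: closed subspaces of a set SSL space are readily seen to be SSL and of cardinality $<\frak d$, hence SSM by Proposition \ref{Sakai1}; but the direct computation above is shorter and runs exactly parallel to the preceding proposition, so I would present that.)
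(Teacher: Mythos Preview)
Your proposal is correct and is exactly the adaptation the paper intends: the paper does not write out a separate proof but simply says ``In a similar way we can prove the following'', referring back to the argument of Proposition~\ref{setSL implys setSM}, and your write-up carries out precisely that substitution (countable $C_n\subseteq\overline A$ in place of countable $\mathcal V_n\subseteq\mathcal U_n$, with the resulting finite $F_n$ automatically contained in $\overline A$). Nothing is missing.
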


Then, by Proposition \ref{Proposition 3.1} we obtain

\begin{corollary}\label{set-SSM}
	\rm For every $T_1$ space $X$ of cardinality less than $\frak d$, the following are equivalent:
\begin{enumerate}
\item $X$ is set SSM 
\item $e(X)=\omega$.
\end{enumerate}
\end{corollary}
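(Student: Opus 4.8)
The plan is to chain together the two ingredients already established in the excerpt. First I would observe that Corollary~\ref{set-SSM} is essentially a formal consequence of Proposition~\ref{frakd} together with Proposition~\ref{Proposition 3.1}, so the proof amounts to verifying the two implications of the biconditional under the cardinality hypothesis $|X|<\frak d$.

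For the direction $(1)\Rightarrow(2)$: assume $X$ is a $T_1$ set SSM space. I would recall the implication chain displayed earlier in the text, namely that every set SSM space has countable extent (this follows since set SSM implies countable extent via the relationship with set SSL spaces, i.e.\ Proposition~\ref{Proposition 3.1} gives that $T_1$ set SSL spaces are exactly those of countable extent, and set SSM is sandwiched below set SSL in the relevant sense — more directly, one uses that a set SSM space is in particular such that every closed discrete subspace must be covered appropriately, forcing $e(X)=\omega$). The cleanest route is to simply cite the displayed chain $\hbox{CC}\Rightarrow\hbox{set SSH}\Rightarrow\hbox{set SSM}\Rightarrow\hbox{countable extent}$ already proved in the excerpt; this gives $e(X)=\omega$ with no extra work and does not even need the cardinality restriction.

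For the direction $(2)\Rightarrow(1)$: assume $X$ is $T_1$ with $e(X)=\omega$ and $|X|<\frak d$. By Proposition~\ref{Proposition 3.1} (the $T_1$ hypothesis is exactly what lets me invoke it), $e(X)=\omega$ implies $X$ is set SSL. Now $X$ is a set SSL space of cardinality less than $\frak d$, so Proposition~\ref{frakd} applies and yields that $X$ is set SSM. This completes the cycle.

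I do not expect a genuine obstacle here — the corollary is a bookkeeping assembly of results proved just above it. The only point requiring a little care is making sure the $T_1$ assumption is carried through to the application of Proposition~\ref{Proposition 3.1} (which is stated for $T_1$ spaces) and that the cardinality bound $|X|<\frak d$ is only invoked where Proposition~\ref{frakd} needs it, i.e.\ in the $(2)\Rightarrow(1)$ direction; the reverse direction holds in full generality. So the write-up is just: ``$(1)\Rightarrow(2)$ is the last implication of the displayed chain above and needs no cardinality assumption; $(2)\Rightarrow(1)$ follows from Proposition~\ref{Proposition 3.1} and Proposition~\ref{frakd}.''
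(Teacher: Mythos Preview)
Your proposal is correct and matches the paper's approach: the corollary is obtained directly from Proposition~\ref{frakd} together with Proposition~\ref{Proposition 3.1}, with $(1)\Rightarrow(2)$ needing no cardinality hypothesis and $(2)\Rightarrow(1)$ using $|X|<\frak d$. The only cosmetic difference is that for $(1)\Rightarrow(2)$ the paper implicitly goes through set SSM $\Rightarrow$ set SSL $\Rightarrow$ $e(X)=\omega$ via Proposition~\ref{Proposition 3.1}, whereas you cite the displayed chain; these are equivalent.
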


\begin{example}\label{example}\rm
There is a Tychonoff space of cardinality $\frak d$ having countable extent  which is not set SSM.
\end{example}
Let $\Bbb P$ the space of irrationals. Take any non-Menger subspace $X\subset\Bbb P$ of cardinality $\frak d$ (for instance, consider the Baire space $\omega^\omega$ which is homeomorphic to $\Bbb P$ and take a cofinal subset of cardinality $\frak d$. It is well known that any cofinal subset of $\omega^\omega$ is not Menger). Of course, $X$ is a paracompact space having countable extent. Since in the class of paracompact Hausdorff spaces we have that M $\Leftrightarrow$ SM
(see \cite{K}), we have that $X$ is not set SSM.    $\hfill\triangle$

\bigskip

By Corollary \ref{set-SSM} and Example \ref{example} we have:

\begin{corollary}\rm
The following statements are equivalent:
\begin{enumerate}
\item $\omega_1<\frak d$;
\item every $T_1$ space of cardinality $\omega_1$ having countable extent is set SSM.
\end{enumerate}
\end{corollary}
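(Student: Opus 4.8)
The plan is to prove the equivalence by establishing both implications, leveraging Corollary \ref{set-SSM} and Example \ref{example} exactly as the surrounding text suggests.

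For the direction $(1)\Rightarrow(2)$: assume $\omega_1<\mathfrak{d}$ and let $X$ be any $T_1$ space of cardinality $\omega_1$ having countable extent. Since $\omega_1<\mathfrak{d}$, the cardinality of $X$ is strictly less than $\mathfrak{d}$, so Corollary \ref{set-SSM} applies directly: a $T_1$ space of cardinality less than $\mathfrak{d}$ is set SSM if and only if it has countable extent. As $e(X)=\omega$ by hypothesis, we conclude that $X$ is set SSM. This half is essentially immediate.

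For the direction $(2)\Rightarrow(1)$, I would argue by contraposition: suppose $\omega_1\geq\mathfrak{d}$, i.e. $\mathfrak{d}=\omega_1$ (since $\mathfrak{d}\geq\omega_1$ always). Then the space $X$ constructed in Example \ref{example} is a Tychonoff (hence $T_1$) space of cardinality $\mathfrak{d}=\omega_1$, it has countable extent, and it is not set SSM. This single space witnesses the failure of statement (2), so $(2)$ implies $\omega_1<\mathfrak{d}$.

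The main (and only) subtlety here is making sure the hypotheses line up: Corollary \ref{set-SSM} requires cardinality \emph{strictly less than} $\mathfrak{d}$, and under $\omega_1<\mathfrak{d}$ a space of cardinality $\omega_1$ indeed satisfies $|X|<\mathfrak{d}$; conversely, for the counterexample direction one needs $\mathfrak{d}=\omega_1$ precisely so that the cardinality-$\mathfrak{d}$ space of Example \ref{example} has cardinality $\omega_1$. No genuine obstacle arises — the work has all been done in Corollary \ref{set-SSM} and Example \ref{example}; the corollary merely records their combination.
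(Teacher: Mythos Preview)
Your proof is correct and follows exactly the approach intended by the paper, which simply states that the corollary follows from Corollary~\ref{set-SSM} and Example~\ref{example}. You have spelled out the two implications precisely as they should be: $(1)\Rightarrow(2)$ is immediate from Corollary~\ref{set-SSM}, and the contrapositive of $(2)\Rightarrow(1)$ uses the space of Example~\ref{example}, which under $\mathfrak{d}=\omega_1$ has cardinality~$\omega_1$.
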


\smallskip

Recall the following result.

\begin{theorem}\rm\cite{Sakai}\label{Sakai2}
	The following statements are equivalent for regular spaces.
	\begin{enumerate}
		\item $\omega_1={\frak d}$;
		\item if $X$ is a SSM space, then $e(X)\leq\omega$.
\end{enumerate}
\end{theorem}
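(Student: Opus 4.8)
The statement is an equivalence, and I would prove the two implications separately, the direction $(1)\Rightarrow(2)$ being the substantial one.

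\emph{$(2)\Rightarrow(1)$, by contraposition.} Since $\omega_1\le{\frak b}\le{\frak d}$ always, the failure of $(1)$ means $\omega_1<{\frak d}$. Under this hypothesis fix an infinite almost disjoint family ${\cal A}$ on $\omega$ with $|{\cal A}|=\omega_1$. Then $|{\cal A}|=\omega_1<{\frak d}$, so by Theorem~\ref{BoMat} the space $\Psi({\cal A})$ is SSM; moreover $\Psi({\cal A})$ is Tychonoff, hence regular, and ${\cal A}$ is a closed discrete subspace of $\Psi({\cal A})$, so $e(\Psi({\cal A}))\ge|{\cal A}|=\omega_1>\omega$. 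Thus $\Psi({\cal A})$ is a regular SSM space of uncountable extent, contradicting $(2)$. (This is precisely the space of Example~\ref{consistent}, now read as a witness to the failure of countable extent.)

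\emph{$(1)\Rightarrow(2)$.} Assume $\omega_1={\frak d}$ and let $X$ be a regular SSM space; suppose toward a contradiction that $e(X)>\omega$ and fix a closed discrete $D=\{x_\alpha:\alpha<\omega_1\}\subseteq X$. Since ${\frak d}=\omega_1$, and since the value of ${\frak d}$ is unchanged if one uses $\le$ in place of $\le^*$ (\cite{vD}), fix a family $\{f_\alpha:\alpha<\omega_1\}\subseteq\omega^\omega$ which is dominating with respect to $\le$. The plan is to attach $f_\alpha$ to $x_\alpha$ and build a sequence $({\cal U}_n:n\in\omega)$ of open covers of $X$ for which no sequence of finite subsets of $X$ produces the required star cover, contradicting SSM. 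As $D$ is closed and discrete, $D\setminus\{x_\alpha\}$ is closed and omits $x_\alpha$, so by regularity one may, in a recursion of length $\omega_1$, choose for each $\alpha$ a decreasing chain $G_0(x_\alpha)\supseteq G_1(x_\alpha)\supseteq\cdots$ of open neighbourhoods of $x_\alpha$ with $\overline{G_0(x_\alpha)}\cap D=\{x_\alpha\}$, arranged so that the ``depth'' at stage $n$ to which a set must penetrate $G_n(x_\alpha)$ in order to meet it is calibrated by $f_\alpha(n)$; this mimics the neighbourhood assignment $a\mapsto\{a\}\cup(a\setminus[0,a(n)))$ used for $\Psi$-spaces in the proof of Theorem~\ref{BoMat}. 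Put ${\cal U}_n=\{G_n(x_\alpha):\alpha<\omega_1\}\cup\{X\setminus D\}$; each ${\cal U}_n$ is an open cover of $X$, and since $x_\alpha\notin\overline{G_0(x_\beta)}$ for $\beta\ne\alpha$ the unique member of ${\cal U}_n$ containing $x_\alpha$ is $G_n(x_\alpha)$, so $x_\alpha\in st(F,{\cal U}_n)$ iff $F\cap G_n(x_\alpha)\ne\emptyset$. Given any sequence $(F_n:n\in\omega)$ of finite subsets of $X$, read off $g\in\omega^\omega$ from the $F_n$ so that $F_n\cap G_n(x_\alpha)\ne\emptyset$ forces $g(n)\ge f_\alpha(n)$; applying the dominating property to the function $n\mapsto g(n)+1$ produces $\alpha<\omega_1$ with $f_\alpha(n)>g(n)$ for all $n$, hence $F_n\cap G_n(x_\alpha)=\emptyset$ for all $n$, i.e. $x_\alpha\notin\bigcup_{n\in\omega}st(F_n,{\cal U}_n)$. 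This contradicts SSM, so $e(X)\le\omega$.

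\emph{Main obstacle.} The delicate point is the neighbourhood construction in $(1)\Rightarrow(2)$: in an abstract regular space there is no given coordinate along which to measure penetration depth into the $G_n(x_\alpha)$, so one must install such a labelling by hand during the $\omega_1$-recursion while keeping the $G_n(x_\alpha)$ nested and $\overline{G_0(x_\alpha)}\cap D=\{x_\alpha\}$, and — the real danger — while preventing a single point of some $F_n$ from lying inside $G_n(x_\alpha)$ for uncountably many $\alpha$ at once (which would let one finite set cover uncountably many $x_\alpha$ in a single step and wreck the diagonalization). Controlling this is exactly where regularity, a suitable thinning of $D$, and the assignment $\alpha\mapsto f_\alpha$ must be used in coordination, and it is why $T_1$ or Hausdorffness does not suffice; this step is essentially the transplantation, into the abstract setting, of the ZFC construction underlying the $\Psi$-space result Theorem~\ref{BoMat}.
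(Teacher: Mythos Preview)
The paper does not prove this theorem; it is quoted from \cite{Sakai} as a known result (introduced with ``Recall the following result''), so there is no in-paper proof to compare against. I can only evaluate your attempt on its own merits and against the technique the paper \emph{does} display in the closely related Theorem~\ref{extentsetSM}.

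Your direction $(2)\Rightarrow(1)$ is correct and is exactly the argument the paper uses in Example~\ref{consistent}.

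Your direction $(1)\Rightarrow(2)$ has a genuine gap, and you essentially say so yourself in the ``Main obstacle'' paragraph. The phrases ``calibrated by $f_\alpha(n)$'' and ``read off $g\in\omega^\omega$ from the $F_n$ so that $F_n\cap G_n(x_\alpha)\neq\emptyset$ forces $g(n)\ge f_\alpha(n)$'' are not definitions; in an abstract regular space there is no coordinate along which a finite set has a ``depth'', and you give no construction producing one. Without this, nothing prevents a single point of $F_n$ from lying in $G_n(x_\alpha)$ for uncountably many $\alpha$ simultaneously, which destroys the diagonalization. So the proof, as written, does not go through.

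The method the paper actually uses (for the parallel Theorem~\ref{extentsetSM}) avoids this obstacle entirely: instead of attaching a dominating family in $\omega^\omega$ to the points of $D$ and trying to manufacture depth, one works directly with a cofinal family in the poset $\bigl([\mathcal B]^{<\omega}\bigr)^\omega$ for a suitable neighbourhood base $\mathcal B$, and diagonalizes against that. For SSM under $\omega_1=\mathfrak d$, the relevant cofinal family can be taken of size $\omega_1$; the neighbourhoods $V_n(y_\alpha)$ are then chosen to miss the $n$th term of the $\alpha$th cofinal sequence, and no notion of depth is needed. That is the idea you would need to supply to close the gap.
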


Now we prove

\begin{theorem}\rm
	The following statements are equivalent for regular spaces.
		\begin{enumerate}
			\item $\omega_1={\frak d}$;
			\item if $X$ is a SSM space, then $e(X)\leq\omega$;
			\item for spaces of cardinality less than ${\frak d}$,  set SSM and SSM are equivalent properties.
			\item for spaces of cardinality less than ${\frak d}$,  set SSL and SSL are equivalent properties.
			\item every closed subspace of a SSM space  $X$ such that $|X|<{\frak d}$ is SSM.
	\end{enumerate}
\end{theorem}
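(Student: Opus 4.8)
The plan is to establish the cycle of implications $1\Leftrightarrow 2$, $3\Leftrightarrow 5$, $1\Rightarrow 3$, $1\Rightarrow 4$, $3\Rightarrow 1$ and $4\Rightarrow 1$, which together yield all five equivalences. The equivalence $1\Leftrightarrow 2$ is precisely Theorem \ref{Sakai2}, so nothing new is required there. For $3\Leftrightarrow 5$ I would first record the purely formal fact that set SSM implies SSM (apply the definition of set SSM with $A=X$), and then invoke Proposition \ref{ered}, which says that a space is set SSM exactly when each of its closed subspaces is SSM: if $3$ holds, then every SSM space $X$ with $|X|<\frak d$ is set SSM, hence all its closed subspaces are SSM, which is $5$; conversely, if $5$ holds and $X$ is SSM with $|X|<\frak d$, then every closed subspace of $X$ is SSM, so $X$ is set SSM by Proposition \ref{ered}, whence SSM and set SSM coincide on spaces of cardinality less than $\frak d$, which is $3$. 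This step involves no cardinal arithmetic.

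Next I would treat the two implications out of $1$. Assume $\omega_1=\frak d$. Then any space $X$ with $|X|<\frak d$ is countable, hence SSL (take $C=X$) and of countable extent, hence set SSL by Proposition \ref{Proposition 3.1}; since $|X|<\frak d$, Propositions \ref{Sakai1} and \ref{frakd} upgrade these to SSM and set SSM respectively. Thus every space of cardinality less than $\frak d$ is simultaneously SSL, set SSL, SSM and set SSM, so both ``set SSM $\Leftrightarrow$ SSM'' and ``set SSL $\Leftrightarrow$ SSL'' hold trivially on that class, giving $3$ and $4$.

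The substantive part is the reverse implications $3\Rightarrow 1$ and $4\Rightarrow 1$, which I would prove by contraposition using a single witness for both. Since $\omega_1\le\frak d$ always, the negation of $1$ is $\omega_1<\frak d$. Fix an almost disjoint family $\mathcal A$ of infinite subsets of $\omega$ with $|\mathcal A|=\omega_1$ and consider $\Psi(\mathcal A)$ (Tychonoff, hence regular), which has cardinality $\omega_1<\frak d$. By Theorem \ref{BoMat} it is SSM; a short direct check shows it is SSL with witnessing set $\omega$, since every neighbourhood of a point of $\mathcal A$ meets $\omega$, so $st(\omega,\mathcal U)=\Psi(\mathcal A)$ for every open cover $\mathcal U$. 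On the other hand $\mathcal A$ is a closed discrete subspace of cardinality $\omega_1$, so $e(\Psi(\mathcal A))>\omega$; hence $\Psi(\mathcal A)$ is not set SSM by Corollary \ref{set-SSM} (this is exactly Example \ref{consistent}) and not set SSL by Proposition \ref{Proposition 3.1}. Thus $\Psi(\mathcal A)$ simultaneously refutes $3$ and $4$, closing the cycle. I do not expect a genuine obstacle here: the only points needing care are the elementary topological facts about $\Psi(\mathcal A)$ (that it is SSL and that its extent equals $\omega_1$) and checking that the quoted results apply in the class of regular spaces (here taken to include $T_1$, as Corollary \ref{set-SSM} and Proposition \ref{Proposition 3.1} are stated for $T_1$ spaces).
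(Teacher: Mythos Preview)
Your proof is correct and follows essentially the same route as the paper: the equivalence $1\Leftrightarrow 2$ via Theorem~\ref{Sakai2}, the equivalence $3\Leftrightarrow 5$ via Proposition~\ref{ered}, and the contrapositive $\neg 1\Rightarrow\neg 3$ (and $\neg 4$) via the $\Psi$-space with $|\mathcal A|=\omega_1$ are all exactly what the paper does.

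There are two small organizational differences. First, for the forward direction the paper argues $2\Rightarrow 3$: if $X$ is SSM with $|X|<\frak d$, then by $2$ we get $e(X)\le\omega$, and then Corollary~\ref{set-SSM} gives set SSM. You instead argue $1\Rightarrow 3,4$ by observing that under $\omega_1=\frak d$ every space of cardinality $<\frak d$ is countable and hence trivially has all four properties; this is equally valid and arguably more elementary. Second, the paper disposes of $3\Leftrightarrow 4$ with the single word ``obvious'' (implicitly relying on Proposition~\ref{Sakai1}, Proposition~\ref{frakd}, and Corollary~\ref{set-SSM} to pass between the Menger and Lindel\"of levels for spaces of cardinality $<\frak d$), whereas you handle $4$ separately by noting the same $\Psi(\mathcal A)$ is SSL but not set SSL. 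Your version is more explicit here; neither approach involves any additional idea.
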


\begin{proof}
$1.\Leftrightarrow 2.$ holds by Theorem \ref{Sakai2}. Now we prove 
$2.\Rightarrow 3.$. Let $X$ be a space of cardinality less than ${\frak d}$. By 2. and Corollary \ref{set-SSM}, we have that $X$ is SSM iff $X$ is set SSM.
Now we prove $3.\Rightarrow 1.$. Assume $\omega_1<{\frak d}$. Consider a space $\Psi({\cal A})$, with $|{\cal A}|=\omega_1$. By Theorem \ref{BoMat}, $\Psi({\cal A})$ is SSM, and since $e(\Psi({\cal A}))> \omega$, $\Psi({\cal A})$ is not set SSM. $3.\Leftrightarrow 4.$ is obvious. $3.\Leftrightarrow 5.$ follows from Proposition \ref{ered}.
\end{proof}

Of course, countable spaces are Menger, then set SSM and SSM.

\begin{corollary}\rm
	For regular spaces $X$ such that $\omega<|X|<{\frak d}$,  SSM and set SSM are not equivalent properties.
\end{corollary}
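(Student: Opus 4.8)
The plan is to read the statement as asserting the existence of a single regular space witnessing the failure of the equivalence, and to extract that witness directly from the preceding theorem (in fact from the proof of $3.\Rightarrow 1.$ there, i.e.\ essentially from Example \ref{consistent}). I would note first that the class of regular spaces $X$ with $\omega<|X|<\mathfrak d$ is nonempty exactly when $\omega_1<\mathfrak d$; so the statement has content precisely under the hypothesis $\omega_1<\mathfrak d$, and I would carry out the argument under that assumption, the case $\omega_1=\mathfrak d$ being vacuous.

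Assuming $\omega_1<\mathfrak d$, the space I would use is the one appearing in Example \ref{consistent}: fix an almost disjoint family $\mathcal A$ of infinite subsets of $\omega$ with $|\mathcal A|=\omega_1$ and consider the Isbell--Mr\'owka space $\Psi(\mathcal A)$. This is a Tychonoff, hence regular, space, and $|\Psi(\mathcal A)|=|\mathcal A|+\omega=\omega_1$, so that indeed $\omega<|\Psi(\mathcal A)|<\mathfrak d$.

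Next I would verify the two properties on this space. Since $|\mathcal A|=\omega_1<\mathfrak d$, Theorem \ref{BoMat} gives that $\Psi(\mathcal A)$ is SSM. On the other hand $\mathcal A$ is a closed discrete subspace of $\Psi(\mathcal A)$ of cardinality $\omega_1$, so $e(\Psi(\mathcal A))\ge\omega_1>\omega$; as $\Psi(\mathcal A)$ is $T_1$ and of cardinality $<\mathfrak d$, Corollary \ref{set-SSM} yields that $\Psi(\mathcal A)$ is not set SSM. Recalling that set SSM always implies SSM, we conclude that $\Psi(\mathcal A)$ is a regular space of cardinality strictly between $\omega$ and $\mathfrak d$ on which SSM and set SSM differ, which is what is claimed.

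This is a bookkeeping exercise built on top of Theorem \ref{BoMat} and Corollary \ref{set-SSM}, so I do not expect a serious obstacle. The only points requiring care are the vacuity remark (making explicit that the statement is meaningful exactly when $\omega_1<\mathfrak d$) and the routine checks that the chosen $\Psi$-space has cardinality lying in the interval $(\omega,\mathfrak d)$ and satisfies the regularity (indeed complete regularity) needed to apply the cited results.
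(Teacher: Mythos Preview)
Your proposal is correct and follows essentially the same route the paper intends: the corollary is stated without proof, but it is meant to be read off from the proof of $3.\Rightarrow 1.$ in the preceding theorem (which is exactly Example~\ref{consistent}), together with the observation that the class in question is nonempty precisely when $\omega_1<\mathfrak d$. Your use of Corollary~\ref{set-SSM} to deny set SSM is fine, though the paper (in Example~\ref{consistent}) simply invokes the implication set SSM $\Rightarrow$ countable extent directly; either works.
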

\begin{corollary}\rm
	Uncountable regular spaces in which SSM and set SSM are equivalent properties have  cardinality $\geq {\frak d}$.
\end{corollary}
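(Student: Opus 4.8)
The plan is to prove the contrapositive: for every uncountable cardinal $\kappa<{\frak d}$ I will exhibit a regular (in fact Tychonoff) space of cardinality $\kappa$ that is SSM but not set SSM. Granting this, if $X$ is an uncountable regular space for which the properties SSM and set SSM agree in the relevant sense --- i.e.\ SSM and set SSM are equivalent for (regular) spaces of cardinality $|X|$ --- then $|X|$ cannot be below ${\frak d}$, so $|X|\geq{\frak d}$. This is, in effect, the contrapositive of the preceding corollary, pushed from $\omega_1$ up to an arbitrary uncountable cardinal below ${\frak d}$.

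First I would fix an uncountable cardinal $\kappa$ with $\omega_1\le\kappa<{\frak d}$. Since ${\frak d}\le{\frak c}$ we have $\kappa\le{\frak c}$, so there is an almost disjoint family $\mathcal{A}$ of infinite subsets of $\omega$ with $|\mathcal{A}|=\kappa$ (take a subfamily of size $\kappa$ of an almost disjoint family of size ${\frak c}$; every subfamily of an almost disjoint family is almost disjoint). Consider the $\Psi$-space $\Psi(\mathcal{A})=\omega\cup\mathcal{A}$: it is Tychonoff, hence regular, and $|\Psi(\mathcal{A})|=\omega+\kappa=\kappa$.

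Two observations then close the argument. On the one hand, since $|\mathcal{A}|=\kappa<{\frak d}$, Theorem \ref{BoMat} gives that $\Psi(\mathcal{A})$ is SSM. On the other hand, the set $\mathcal{A}$ of non-isolated points is a closed discrete subspace of $\Psi(\mathcal{A})$ of cardinality $\kappa$, so $e(\Psi(\mathcal{A}))\geq\kappa>\omega$; hence, since every set SSM space has countable extent (the displayed chain of implications above, or Corollary \ref{set-SSM} applied to the $T_1$ space $\Psi(\mathcal{A})$ of cardinality $<{\frak d}$), $\Psi(\mathcal{A})$ is not set SSM. Thus SSM and set SSM are not equivalent for regular spaces of cardinality $\kappa$, which is exactly what was needed.

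I do not expect a genuine obstacle: the proof is a short application of Theorem \ref{BoMat} together with the implication ``set SSM $\Rightarrow$ countable extent'', and it merely upgrades Example \ref{consistent} from $\kappa=\omega_1$ to an arbitrary uncountable $\kappa<{\frak d}$. The only points deserving a line of care are the existence of an almost disjoint family of the prescribed uncountable size $\kappa$ (which uses $\kappa\le{\frak c}$) and fixing the precise reading of ``SSM and set SSM are equivalent properties'' in the statement, so that the conclusion really is the contrapositive of the previous corollary.
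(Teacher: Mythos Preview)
Your proof is correct and is precisely the natural fleshing-out of what the paper leaves implicit: the corollary is stated without proof as an immediate consequence of the preceding theorem, whose $3.\Rightarrow 1.$ step already uses $\Psi(\mathcal{A})$ with $|\mathcal{A}|=\omega_1$ together with Theorem~\ref{BoMat} and the implication ``set SSM $\Rightarrow$ countable extent''. You simply (and correctly) replace $\omega_1$ by an arbitrary uncountable $\kappa<\mathfrak{d}$, noting that $\kappa<\mathfrak{d}\le\mathfrak{c}$ guarantees an almost disjoint family of the right size; this is exactly the intended argument.
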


In \cite[Example 5]{KKS-MS} Ko\v{c}inac, Konca and Singh constructed a $T_1$ set SM space which is not set SSM and posed the following question.

\begin{question}\rm\cite{KKS-MS} \label{QUESTION}
	Does there exist a Tychonoff set SM space which is not set SSM?
\end{question}
Using Proposition \ref{setSL implys setSM} we can give a consistent answer to Question \ref{QUESTION}. 

\begin{example}\label{set SM, not set SSM} 
	\rm ($\omega_1<\frak d$)  A Tychonoff set SM space which is not set SSM.
\end{example}
Assume $\omega_1<\frak d$ and consider $\Psi(\mathcal A)$ with $|{\mathcal A}|=\omega_1$. Since $\Psi(\mathcal A)$ is separable, it is set SL hence, by Proposition \ref{setSL implys setSM}, it is set SM. Since $e(\Psi(\mathcal A))>\omega$, $\Psi(\mathcal A)$ is not set SSM. $\hfill\triangle$

\section{On the product of set SM and set SSM with compact spaces.}

Recall that the product of a SC (SSC) space with a compact space is  SC (SSC) (\cite{Fle}, \cite{vDRRT}); further the product of a SL space with a compact space is  SL \cite{vDRRT} while the product of a SSL space with a compact space need not be  SSL \cite[Example 3.3.4]{vDRRT}.
In \cite{K} Ko\v{c}inac proved that the product of a SM space with a compact space is SM. Using \cite[Lemma 2.3]{BM1}, Matveev noted that  assuming $\omega_1<\frak d$,  if $X=\Psi({\cal A})$ with $|{\mathcal A}|=\omega_1$ and $Y$ is a compact space such that $c(Y)>\omega$, then the product $X\times Y$ is not SSL, hence not SSM; therefore he gave a consistent example of a not SSM space which is the product of a SSM space and a compact space.  Then, it is natural to consider the following questions. 

\begin{question}\label{QSSM}
	\rm \cite{KKS-MS} Is the product of a set SSM space with a compact space a  set SSM space?
\end{question}

\begin{question}\label{QSM}
\rm \cite{KKS-MS} Is the product of a set SM space with a compact space a set SM space?
\end{question}

In the following we give a partial answer to Question \ref{QSSM} and a negative answer to Question \ref{QSM}. Note that we also show that set SSL property is preserved in the $T_1$ product with compact spaces and that set SC and set SL properties are not preserved in the product with compact spaces. (For completness, we note that, by Proposition  \ref{prop}, set SSC property is preserved in the Hausdorff product with compact spaces).

\smallskip

The following fact can be easly checked (we give the proof for sake of completeness). Recall that a map is perfect if it is continuous, closed, onto and each fiber is compact.

\begin{proposition}\rm\label{perfect}
If $f:X\to Y$ is a perfect map
and $A$ is an uncountable closed and discrete subspace of $X$, then $f(A)$ is an uncountable closed and discrete subspace of $Y$.
\end{proposition}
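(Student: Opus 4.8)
The plan is to argue by contrapositive on the cardinality and closedness/discreteness of $f(A)$, exploiting that $f$ is perfect, hence in particular a closed map with compact fibers. First I would show $f(A)$ is closed: if $f$ is a closed map and $A$ is closed in $X$, then $f(A)$ is closed in $Y$ — this is immediate from the definition of a closed map, so no work is needed beyond quoting it. (Note we do not even need $A$ closed for this if we instead observe $f(\overline{A})$ is closed and contains $f(A)$; but since $A$ is given closed, $f(A)$ is closed outright.)

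Next I would establish that $f(A)$ is discrete. Fix $y \in f(A)$ and consider the fiber $f^{-1}(y)$, which is compact. The set $A \cap f^{-1}(y)$ is a closed subspace of the compact set $f^{-1}(y)$, and it is also discrete (being a subspace of the discrete space $A$); a discrete compact space is finite, so $A \cap f^{-1}(y)$ is finite, say equal to $\{a_1,\dots,a_k\}$. Since $A$ is discrete, for each $i$ pick an open $W_i \subseteq X$ with $W_i \cap A = \{a_i\}$; then $W = \bigcup_i W_i$ is open with $W \cap A = \{a_1,\dots,a_k\} = A \cap f^{-1}(y)$. Now $X \setminus W$ is closed, so $f(X\setminus W)$ is closed in $Y$, and it does not contain $y$ (because every point of $A$ mapping to $y$ lies in $W$, while points outside $A$ are irrelevant — more carefully: $f^{-1}(y) \subseteq W \cup (X \setminus A)$, but I must be cautious since $X \setminus A$ is not the issue; the clean statement is $y \notin f(X \setminus W)$ iff $f^{-1}(y) \subseteq W$, which need not hold). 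This is the point requiring the most care, so let me restructure: instead, use that $f$ is closed to produce a saturated open neighborhood. Since $f$ is closed and $f^{-1}(y) \cap (A \setminus \{a_1,\dots,a_k\}) = \emptyset$ with $A \setminus \{a_1,\dots,a_k\}$ closed in $X$ (it is closed because $A$ is closed and discrete, so every subset of $A$ is closed in $X$), the set $f(A \setminus \{a_1,\dots,a_k\})$ is closed in $Y$ and misses $y$. Hence $V = Y \setminus f(A \setminus \{a_1,\dots,a_k\})$ is an open neighborhood of $y$ with $V \cap f(A) \subseteq \{y\}$, so $y$ is isolated in $f(A)$. As $y$ was arbitrary, $f(A)$ is discrete.

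Finally I would handle cardinality: since each fiber $f^{-1}(y)$ meets $A$ in a finite set, $A = \bigcup_{y \in f(A)} (A \cap f^{-1}(y))$ is a union of $|f(A)|$ finite sets, so $|A| \le |f(A)| \cdot \omega$. As $|A|$ is uncountable, $|f(A)|$ must be uncountable as well. This completes the proof.

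The main obstacle is the discreteness step: one must pass from "$A$ meets each fiber finitely" to "each point of $f(A)$ is isolated in $f(A)$," and the temptation to use a non-saturated open set around $a_i$ fails because $f$ need not be open. The fix is to use closedness of $f$ on the closed set $A \setminus (A \cap f^{-1}(y))$ — which is closed precisely because $A$ is closed and discrete in $X$, making all its subsets closed in $X$ — and take complements. Everything else (closedness of $f(A)$, the cardinality count) is routine.
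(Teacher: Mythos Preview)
Your proof is correct and follows essentially the same approach as the paper: both arguments show each fiber meets $A$ in a finite set (compact $+$ closed discrete), deduce uncountability of $f(A)$ by a counting argument, and obtain discreteness by pushing forward the closed set $A \setminus (A \cap f^{-1}(y))$ under the closed map $f$. The paper packages this last step by first building an open $U = \bigcup U_i$ with $A \cap U = \{x_1,\dots,x_n\}$ and noting $A \setminus U$ is closed, whereas you observe directly that every subset of a closed discrete set is closed in $X$; these are the same idea, and your version is arguably cleaner.
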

\begin{proof}
	Let $f$ and $A$ as in the hypothesis. Clearly $f(A)$ is closed in $Y$. Note that, for every $y\in f(A)$, $f^{-1}(y)\cap A$ is a closed subset of the compact subspace $f^{-1}(y)$ and then, since $A$ is discrete, it is finite. Then $A$ is countable, otherwise 
$f(A)$ is countable.  Now, fix $y\in f(A)$ and say $f^{-1}(y)\cap A=\{x_1,...,x_n\}$. For every $i=1,...,n$ fix an open subset $U_i$ of $X$ such that $A\cap U_i=\{x_i\}$ and put $U=\bigcup_{i=1}^{n}U_i$. Since $A\setminus U$ is a closed subset of $X$, we have that $f(A\setminus U)=f(A)\setminus \{y\}$ is a closed subset of Y, and then $\{y\}$ is open in $f(A)$ with the topology inherited from $Y$.
\end{proof}

By the previous proposition, we obtain the following result.

\begin{corollary}\label{preservationextent}
	\rm The product of a space having countable extent with a compact space has countable extent.
\end{corollary}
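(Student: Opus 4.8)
The plan is to realize the product projection as a perfect map and then quote Proposition \ref{perfect} directly. Let $X$ be a space of countable extent and let $K$ be a compact space; write $Z=X\times K$ and let $\pi\colon Z\to X$ be the first-coordinate projection. If $K=\emptyset$ there is nothing to prove, so assume $K\neq\emptyset$. First I would check that $\pi$ is a perfect map: it is continuous and onto by definition of the product topology, each fiber $\pi^{-1}(x)=\{x\}\times K$ is homeomorphic to $K$ and hence compact, and $\pi$ is closed because the projection along a compact factor is a closed map (the Kuratowski/tube lemma). These are standard facts and require no separation axioms, which matches the blanket convention of the paper.

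With perfectness in hand, the argument is a one-line contradiction. Suppose $e(Z)>\omega$, i.e. $Z$ has an uncountable closed and discrete subspace $A$. By Proposition \ref{perfect} applied to the perfect map $\pi$, the image $\pi(A)$ is an uncountable closed and discrete subspace of $X$, contradicting $e(X)=\omega$. Hence every closed discrete subset of $Z$ is countable, i.e. $e(X\times K)\le\omega$, which is the assertion.

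I do not anticipate a genuine obstacle here: the only points deserving a word of care are the degenerate case $K=\emptyset$ (handled above) and the justification that $\pi$ is closed, which I would dispatch by citing the tube lemma rather than reproving it. Everything else is an immediate invocation of Proposition \ref{perfect}.

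\begin{proof}
Let $X$ have countable extent and let $K$ be compact; put $Z=X\times K$. If $K=\emptyset$ then $Z=\emptyset$ and the conclusion is trivial, so assume $K\neq\emptyset$. The projection $\pi\colon Z\to X$ is continuous and onto, each fiber $\pi^{-1}(x)=\{x\}\times K$ is homeomorphic to $K$ and hence compact, and $\pi$ is closed by the tube lemma; thus $\pi$ is a perfect map. If $Z$ had an uncountable closed and discrete subspace $A$, then by Proposition \ref{perfect} the set $\pi(A)$ would be an uncountable closed and discrete subspace of $X$, contradicting $e(X)=\omega$. Therefore $e(X\times K)\le\omega$.
\end{proof}
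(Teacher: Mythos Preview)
Your proof is correct and follows exactly the same route as the paper: the projection $X\times K\to X$ is a perfect map (closed by the tube lemma, with compact fibers), and then Proposition \ref{perfect} is invoked to rule out uncountable closed discrete subsets of the product. You simply supply more detail (the $K=\emptyset$ case and the explicit verification of perfectness) than the paper's one-line version.
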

\begin{proof} Let $X$ be a space with countable extent and $Y$ be a compact space. The projection from $X\times Y$ onto $X$ is a perfect map. Then, by Proposition \ref{perfect}, $e(X\times Y)=\omega$. \end{proof}

By Proposition \ref{Proposition 3.1}, the previous result can be restated as follows.

\begin{proposition}
	\rm The $T_1$ product of a set SSL space with a compact space is set SSL.
\end{proposition}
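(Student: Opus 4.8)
The plan is to derive this immediately from the preceding material, using Corollary \ref{preservationextent} together with Proposition \ref{Proposition 3.1}. The statement asserts that if $X$ is set SSL and $Y$ is compact, then $X \times Y$ is set SSL, provided the product is $T_1$. Since Proposition \ref{Proposition 3.1} characterizes set SSL among $T_1$ spaces as exactly the spaces of countable extent, the assertion is logically equivalent to: the $T_1$ product of a space of countable extent with a compact space has countable extent. But that is precisely Corollary \ref{preservationextent} (whose proof goes through the projection $X\times Y \to X$ being perfect and invoking Proposition \ref{perfect}).

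So the proof I would write is a two-line reduction. First I would note that since $X$ is set SSL and $X$ embeds as a closed subspace (say $X \times \{y_0\}$) — or more directly, since $X$ is $T_1$ and set SSL, Proposition \ref{Proposition 3.1} gives $e(X) = \omega$. Then Corollary \ref{preservationextent} yields $e(X \times Y) = \omega$. Finally, since $X \times Y$ is assumed $T_1$, applying the other direction of Proposition \ref{Proposition 3.1} gives that $X \times Y$ is set SSL, as desired.

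The only thing to be slightly careful about is the $T_1$ hypothesis: Proposition \ref{Proposition 3.1} is stated for $T_1$ spaces in both directions, so one needs $X$ to be $T_1$ in order to conclude $e(X)=\omega$ from set SSL-ness, and one needs $X\times Y$ to be $T_1$ to go back. If $Y$ is $T_1$ and $X$ is $T_1$ then $X \times Y$ is $T_1$, but since the statement only stipulates "$T_1$ product" I would simply take "$X$ is $T_1$ and $X\times Y$ is $T_1$" as the working hypothesis (this is how the phrase is meant). There is no real obstacle here — the entire content has already been established in Corollary \ref{preservationextent}; this proposition is just its translation into the language of set SSL spaces.

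\begin{proof}
Since $X$ is $T_1$ and set SSL, Proposition \ref{Proposition 3.1} gives $e(X)=\omega$. As $Y$ is compact, Corollary \ref{preservationextent} yields $e(X\times Y)=\omega$. Finally, since $X\times Y$ is $T_1$, Proposition \ref{Proposition 3.1} applied again gives that $X\times Y$ is set SSL.
\end{proof}
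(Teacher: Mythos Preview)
Your proof is correct and is exactly the argument the paper gives: the proposition is stated in the paper explicitly as a restatement of Corollary \ref{preservationextent} via Proposition \ref{Proposition 3.1}, and your two-line reduction implements precisely that.
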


\begin{corollary}\label{prodextent}
	\rm The product of a set SSM space with a compact space has countable extent.
\end{corollary}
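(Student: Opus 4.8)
The plan is to derive Corollary~\ref{prodextent} directly from the two facts already established in the excerpt: Corollary~\ref{prodextent} concerns the product $X \times Y$ of a set SSM space $X$ with a compact space $Y$, and I would first invoke the chain of implications displayed earlier in the paper, namely that every set SSM space has countable extent. Thus $X$ has countable extent. Then I would apply Corollary~\ref{preservationextent}, which says precisely that the product of a space with countable extent and a compact space again has countable extent. Composing these two observations gives $e(X \times Y) = \omega$, which is the assertion of Corollary~\ref{prodextent}.

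More explicitly, the argument runs as follows. Let $X$ be a set SSM space and $Y$ a compact space. By the implication ``set SSM $\Rightarrow$ countable extent'' (which follows from Proposition~\ref{Proposition 3.1} together with Proposition~\ref{frakd}, or is recorded in the displayed chain $\mathrm{CC} \Rightarrow \mathrm{set\ SSH} \Rightarrow \mathrm{set\ SSM} \Rightarrow \mathrm{countable\ extent}$), we have $e(X) = \omega$. Now Corollary~\ref{preservationextent} applies verbatim to $X$ and $Y$, yielding $e(X \times Y) = \omega$. That is all that is required.

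There is essentially no obstacle here: the statement is a one-line consequence of results proved immediately before it, and in fact it is a weakening of the (at this point still forthcoming) stronger assertion that the $T_1$ product of a set SSM space with a compact space is actually set SSM. The only point worth a moment's care is that Corollary~\ref{preservationextent} was proved via the perfect projection $X \times Y \to X$ (Proposition~\ref{perfect}), and that proof needs no separation axiom on $X$ or $Y$ beyond what is implicit in ``compact''; since no separation axioms are being assumed a priori in this paper, one should simply note that Proposition~\ref{perfect} and Corollary~\ref{preservationextent} are stated and proved in that generality, so the present corollary inherits the same level of generality. Hence the proof is complete: $e(X \times Y) = \omega$.
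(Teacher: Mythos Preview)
Your argument is correct and matches the paper's approach exactly: the corollary is stated without proof in the paper, and the intended justification is precisely the composition of ``set SSM $\Rightarrow$ countable extent'' with Corollary~\ref{preservationextent}. One small quibble: your parenthetical citation of Proposition~\ref{frakd} is misplaced (that proposition goes in the wrong direction), but since you also point to the displayed implication chain, the reasoning stands.
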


Then, by  Corollary \ref{set-SSM} we have

\begin{corollary}
	\rm The $T_1$ product of cardinality less than $\frak d$ of a set SSM space with a compact space is set SSM.
		\end{corollary}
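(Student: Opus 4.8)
The plan is to combine the two results immediately preceding this corollary with Corollary~\ref{set-SSM}. By Corollary~\ref{prodextent}, if $X$ is set SSM and $Y$ is compact, then $X\times Y$ has countable extent. If moreover $X\times Y$ is $T_1$ and $|X\times Y|<\frak d$, then Corollary~\ref{set-SSM} applies directly to the space $X\times Y$: a $T_1$ space of cardinality less than $\frak d$ is set SSM if and only if it has countable extent. Since we have just observed that $e(X\times Y)=\omega$, we conclude that $X\times Y$ is set SSM.

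So the proof is essentially a one-line deduction:
\begin{proof}
	Let $X$ be a set SSM space and $Y$ a compact space such that $X\times Y$ is $T_1$ and $|X\times Y|<\frak d$. By Corollary~\ref{prodextent}, $e(X\times Y)=\omega$. Since $X\times Y$ is $T_1$ and has cardinality less than $\frak d$, Corollary~\ref{set-SSM} gives that $X\times Y$ is set SSM.
\end{proof}

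I do not anticipate any real obstacle here; the work has all been done in Corollaries~\ref{prodextent} and~\ref{set-SSM}. The only points worth double-checking are bookkeeping ones: that the hypothesis "$T_1$ product of cardinality less than $\frak d$" is meant to assert both that $X\times Y$ is $T_1$ (equivalently $X$ and $Y$ are $T_1$) and that $|X\times Y|<\frak d$, and that Corollary~\ref{set-SSM} is indeed stated for arbitrary $T_1$ spaces of cardinality less than $\frak d$ (it is). One could also remark that since $Y$ is compact and $T_1$, $|X\times Y|<\frak d$ is a genuine constraint only when $|Y|<\frak d$ as well, but this is not needed for the argument — the statement simply takes the cardinality bound on the product as a hypothesis.
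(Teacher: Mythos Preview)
Your proof is correct and matches the paper's own reasoning exactly: the paper simply writes ``Then, by Corollary~\ref{set-SSM} we have'' immediately after Corollary~\ref{prodextent}, which is precisely the two-step deduction you spell out.
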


\smallskip

Recall the following proposition.

\begin{proposition}\label{prop34}
	\rm \cite[Proposition 3.4]{BMae} Let $X$ be a space. If there exist a closed and discrete subspace $D$ of $X$ having uncountable cardinality and a disjoint family ${\mathcal U}=\{O_a : a\in D\}$ of open neighbourhoods of points $a\in D$, then $X$ is not set SL.
\end{proposition}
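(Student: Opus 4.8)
The plan is to exhibit a single pair $(A,\mathcal{U})$ witnessing the failure of the set SL property, namely $A=D$ and $\mathcal{U}=\{O_a:a\in D\}$. First I would check that the hypotheses in the definition of set SL are satisfied by this pair: since $D$ is closed we have $\overline{A}=\overline{D}=D$, and since $a\in O_a$ for every $a\in D$ we get $D\subseteq\bigcup\mathcal{U}$, so $\overline{A}\subseteq\bigcup\mathcal{U}$ as required.

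Next comes the core computation. Let $\mathcal{V}$ be an arbitrary countable subfamily of $\mathcal{U}$, say $\mathcal{V}=\{O_a:a\in D'\}$ for some countable $D'\subseteq D$, so that $\bigcup\mathcal{V}=\bigcup_{a\in D'}O_a$. Because $\mathcal{U}$ is a \emph{disjoint} family, for $b\in D$ the set $O_b$ meets $\bigcup_{a\in D'}O_a$ precisely when $b\in D'$; hence
$$st\Big(\bigcup\mathcal{V},\mathcal{U}\Big)=\bigcup\{O_b: b\in D,\ O_b\cap\textstyle\bigcup\mathcal{V}\neq\emptyset\}=\bigcup_{a\in D'}O_a.$$
Moreover, for any $c\in D\setminus D'$ we have $c\in O_c$, while $O_c\cap O_a=\emptyset$ for every $a\in D'$ (again by disjointness, since $c\neq a$), so $c\notin O_a$ for all $a\in D'$, i.e.\ $c\notin st(\bigcup\mathcal{V},\mathcal{U})$.

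Finally I would invoke cardinality: since $|D|>\omega$ while $D'$ is countable, $D\setminus D'\neq\emptyset$, so a point $c$ as above exists, giving $A=D\not\subseteq st(\bigcup\mathcal{V},\mathcal{U})$. As $\mathcal{V}$ was an arbitrary countable subfamily of $\mathcal{U}$, no countable subfamily stars over $A$, and therefore $X$ is not set SL.

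There is no genuine obstacle here; the only points deserving care are that it is the \emph{disjointness} of $\mathcal{U}$ — not merely the distinctness of its members — that forces $st(\bigcup\mathcal{V},\mathcal{U})$ to collapse onto $\bigcup\mathcal{V}$, and that this same disjointness is what keeps the leftover point $c\in D\setminus D'$ outside the star. The argument is the countable analogue of the one used for Proposition~\ref{regular set SC}.
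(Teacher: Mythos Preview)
Your argument is correct. The paper does not actually supply a proof of this proposition --- it is quoted verbatim from \cite[Proposition 3.4]{BMae} --- so there is no in-paper proof to compare against; that said, your approach is precisely the natural one and, as you note, is the countable analogue of the short argument the paper does give for Proposition~\ref{regular set SC}.
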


Now we prove the following useful result.

\begin{proposition}\label{corretta?}
	\rm
If $e(X)>\omega$ and $c(Y)>\omega$, where $Y$ is $T_1$, then $X\times Y$ is not set SL.
\end{proposition}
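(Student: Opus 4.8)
The plan is to use Proposition \ref{prop34}, which says that if $X\times Y$ contains an uncountable closed discrete subspace $D$ admitting a disjoint open expansion, then $X\times Y$ is not set SL. So the whole task reduces to producing such a $D$ and such a family of pairwise disjoint open neighbourhoods inside the product.

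**First I would** fix witnesses for the two hypotheses. Since $e(X)>\omega$, pick a closed discrete subspace $\{x_\alpha:\alpha<\omega_1\}$ of $X$. Since $c(Y)>\omega$, pick a cellular family $\{W_\alpha:\alpha<\omega_1\}$ of nonempty pairwise disjoint open subsets of $Y$, and choose a point $y_\alpha\in W_\alpha$ for each $\alpha$. The natural candidate is
$$D=\{(x_\alpha,y_\alpha):\alpha<\omega_1\}.$$
This set is uncountable (the first coordinates are distinct). For the disjoint open expansion: because $\{x_\alpha\}$ is a (closed) discrete subset of $X$, for each $\alpha$ there is an open $O_\alpha\ni x_\alpha$ in $X$ with $O_\alpha\cap\{x_\beta:\beta<\omega_1\}=\{x_\alpha\}$; then set $G_\alpha=O_\alpha\times W_\alpha$. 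The family $\{G_\alpha:\alpha<\omega_1\}$ is pairwise disjoint, since for $\alpha\neq\beta$ the second factors $W_\alpha,W_\beta$ are already disjoint, and each $G_\alpha$ is an open neighbourhood of $(x_\alpha,y_\alpha)$.

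**Next** I must check that $D$ is closed and discrete in $X\times Y$. Discreteness is immediate: $G_\alpha\cap D=\{(x_\alpha,y_\alpha)\}$ because $G_\alpha$ meets $\{x_\beta:\beta<\omega_1\}$ only in $x_\alpha$ in the first coordinate. For closedness, take any $(p,q)\notin D$; I want an open neighbourhood missing $D$. Here is where $T_1$-ness of $Y$ enters: if $q\notin\{y_\alpha:\alpha<\omega_1\}$ then $q$ lies in at most one $W_\alpha$ (the $W$'s are disjoint), so $q$ has an open neighbourhood $V$ meeting at most one $W_\alpha$, say $W_{\alpha_0}$ — and by $T_1$ we may shrink $V$ to avoid $y_{\alpha_0}$; then any neighbourhood of $(p,q)$ of the form (anything)$\times V$ misses all of $D$. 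If instead $q=y_{\alpha_0}$ for a (unique) $\alpha_0$ but $p\neq x_{\alpha_0}$, use a neighbourhood $U$ of $p$ with $U\cap\{x_\beta:\beta<\omega_1\}\subseteq\{x_\beta:\beta\neq\alpha_0\text{ only finitely often}\}$ — more simply, since $\{x_\beta\}$ is discrete, $p$ has a neighbourhood $U$ with $U\cap\{x_\beta:\beta<\omega_1\}$ finite and not containing $x_{\alpha_0}$; the finitely many offending indices $\beta_1,\dots,\beta_k$ each have $y_{\beta_i}\neq q$ (as $q=y_{\alpha_0}$ and the $y$'s are distinct), so shrink $U\times(\text{nbhd of }q)$ further to avoid those finitely many points of $D$. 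Thus $D$ is closed.

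**The main obstacle** I expect is precisely the closedness verification for $D$ in the second case (when the second coordinate of the target point is one of the $y_\alpha$): one must use both the discreteness of $\{x_\alpha\}$ in $X$ and the $T_1$ separation in $Y$ to peel off the finitely many nearby points of $D$. Everything else — the disjoint expansion, uncountability, the appeal to Proposition \ref{prop34} — is routine. Once $D$ and $\{G_\alpha\}$ are in hand, Proposition \ref{prop34} applied to the space $X\times Y$ immediately yields that $X\times Y$ is not set SL, completing the proof. $\hfill\square$
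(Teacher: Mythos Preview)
Your overall strategy coincides with the paper's: build the diagonal set $D=\{(x_\alpha,y_\alpha):\alpha<\omega_1\}$, exhibit a pairwise disjoint open expansion (via the cellular family in $Y$), and invoke Proposition~\ref{prop34}. That part is fine.

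The closedness verification, however, contains a genuine slip. In your first case ($q\notin\{y_\alpha\}$) you assert that because $q$ lies in at most one $W_\alpha$, it has a neighbourhood meeting at most one $W_\alpha$. This fails when $q\notin\bigcup_\alpha W_\alpha$: disjointness of the $W_\alpha$'s gives no control over how many of them accumulate at such a $q$ (think of $W_n=(1/(n{+}1),1/n)\subseteq\mathbb{R}$ and $q=0$). Your second case also leans on ``discreteness'' of $\{x_\beta\}$ to get a neighbourhood of $p$ meeting only finitely many $x_\beta$; you need \emph{closed} discreteness for that, and in fact it then gives $\le 1$, not merely finitely many.

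The fix, and what the paper does, is to case on the first coordinate rather than the second and simply write the complement as an explicit union of open sets:
\[
(X\times Y)\setminus D \;=\; \bigl((X\setminus\{x_\alpha:\alpha<\omega_1\})\times Y\bigr)\;\cup\;\bigcup_{\alpha<\omega_1}\bigl(O_\alpha\times(Y\setminus\{y_\alpha\})\bigr),
\]
where $O_\alpha$ is your open set with $O_\alpha\cap\{x_\beta\}=\{x_\alpha\}$. Closedness of $\{x_\alpha\}$ makes the first piece open; $T_1$-ness of $Y$ makes each $Y\setminus\{y_\alpha\}$ open. This one line replaces both of your cases and avoids the faulty step.
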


\begin{proof}
	Let $S=\{s_\alpha : \alpha <\omega_1\}$ 
	be a closed and discrete subset of $X$,  ${\mathcal O}=\{O_\alpha : \alpha < \omega_1\}$ be a pairwise disjoint family of nonempty open subsets of $Y$. For every $\alpha < \omega_1$, fix $t_\alpha\in O_\alpha$. Put $A=\{(s_\alpha,t_\alpha): \alpha < \omega_1\}$. It is obvious that $A$ is an uncountable discrete subspace of $X\times Y$.
	Now we prove that $A$ is closed. For every $\alpha < \omega_1$ there exists an open set, say $N_\alpha$, such that $N_\alpha\cap S=\{s_\alpha\}$. 
	Then $(X\times Y)\setminus A=((X\setminus S)\times Y) \cup \bigcup_{\alpha < \omega_1} (N_\alpha \times (Y\setminus \{t_\alpha\}))$. Then, by Proposition \ref{prop34}, $X\times Y $ is not set SL. 
\end{proof}

\begin{example}\rm \label{product}  There exists a set SC (hence set SL and  set SM) space $X$ and a compact space $Y$ with $c(Y)>\omega$ such that $X\times Y$ is not set SL  (hence neither set SM nor set SC). 
\end{example}
Consider the set $X=\omega_1 \cup A$, where $A=\{a_\alpha : \alpha\in \omega_1\}$ is a set of cardinality $\omega_1$, topologized as follows: $\omega_1$ has the usual order topology and is an open subspace of $X$; a basic neighborhood of a point $a_\alpha\in A$ takes the form 
$$O_\beta(a_\alpha)=\{a_\alpha\}\cup (\beta, \omega_1), \hbox{ where } \beta<\omega_1.$$ In \cite{BMae} it was proved that $X$ is set SC, hence $X$ is set SM. We have that $e(X)>\omega$. If $Y$ is any compact space with $c(Y)>\omega$, by Proposition \ref{corretta?}, $X\times Y$  is not set SL. 
$\hfill\triangle$

\section{On set star Hurewicz and set strongly star Hurewicz properties.}

Recall the following definitions.

\begin{definition}
	\rm \cite{K, BCK}\label{star Hurewicz}
A space $X$ is

$\bullet$ star Hurewicz (briefly, SH) if for each sequence $({\cal U}_n : n \in \Bbb N)$ of open covers of $X$ there exists a sequence $({\cal V}_n : n \in \Bbb N)$  such that ${\mathcal V}_n$, $n\in\omega$, is a finite subset of ${\mathcal U}_n$ and $\forall x \in X$, $x \in st(\bigcup {\mathcal{V}}_{n},{\mathcal{U}}_{n})$ for all but finitely many $n \in \omega$;

$\bullet$ strongly star Hurewicz (briefly, SSH) if for each sequence $({\cal U}_n : n \in \Bbb N)$ of open covers of $X$ there exists a sequence $(F_n : n \in \Bbb N)$  such that $F_n$, $n\in\omega$, is a finite subset of $X$ and $\forall x \in X$, $x \in st(F_{n},{\mathcal{U}}_{n})$ for all but finitely many $n \in \omega$.
\end{definition}

The following result is a characterization of SSH property in terms of a relative version of it. The proof is similar to the proof of Proposition \ref{caratterizzazione}.

\begin{proposition}\rm The following are equivalent for a space $X$:
	\begin{enumerate}
		\item 	
		$X$ is SSH;
		\item 
		for each nonempty subset $A$ of $X$ and for each sequence $({\cal U}_n : n \in \Bbb N)$ of collection of open sets of $X$ such that $\overline A\subset \bigcup{\mathcal U}_n$ for every $n\in\omega$, there exists a sequence $(F_n : n \in \Bbb N)$  such that $F_n$, $n\in\omega$, is a finite subset of $X$ and $\forall x \in A$, $x \in st(F_{n},{\mathcal{U}}_{n})$ for all but finitely many $n \in \omega$.
	\end{enumerate}
\end{proposition}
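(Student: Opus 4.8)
The plan is to mimic the proof of Proposition \ref{caratterizzazione}, with the only change being that the quantifier ``for all but finitely many $n$'' must be carried through the argument pointwise in $n$. The implication $2.\Rightarrow 1.$ is immediate: simply take $A=X$, so that every open cover $\mathcal{U}_n$ of $X$ satisfies $\overline{A}=X\subseteq\bigcup\mathcal{U}_n$, and the conclusion of $2.$ becomes exactly the definition of SSH.

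For $1.\Rightarrow 2.$, I would fix a nonempty $A\subseteq X$ and a sequence $(\mathcal{U}_n : n\in\mathbb{N})$ of families of open sets of $X$ with $\overline{A}\subseteq\bigcup\mathcal{U}_n$ for every $n$, and set
\[
\mathcal{U}'_n=\mathcal{U}_n\cup\{X\setminus\overline{A}\}\qquad(n\in\mathbb{N}).
\]
Each $\mathcal{U}'_n$ is an open cover of $X$, so by SSH there is a sequence $(F_n : n\in\mathbb{N})$ of finite subsets of $X$ such that for every $x\in X$, $x\in st(F_n,\mathcal{U}'_n)$ for all but finitely many $n$. Now fix $x\in A$. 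Since $x\in A\subseteq\overline{A}$, the point $x$ does not belong to $X\setminus\overline{A}$, hence for each $n$
\[
st(x,\mathcal{U}'_n)=\bigcup\{U\in\mathcal{U}'_n : x\in U\}=\bigcup\{U\in\mathcal{U}_n : x\in U\}=st(x,\mathcal{U}_n),
\]
and therefore, for each individual $n$,
\[
x\in st(F_n,\mathcal{U}'_n)\iff st(x,\mathcal{U}'_n)\cap F_n\neq\emptyset\iff st(x,\mathcal{U}_n)\cap F_n\neq\emptyset\iff x\in st(F_n,\mathcal{U}_n).
\]
Consequently the set of $n$ for which $x\in st(F_n,\mathcal{U}_n)$ coincides with the set of $n$ for which $x\in st(F_n,\mathcal{U}'_n)$, which is cofinite; so $x\in st(F_n,\mathcal{U}_n)$ for all but finitely many $n$. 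As $x\in A$ was arbitrary, the sequence $(F_n)$ witnesses $2.$

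I do not expect a genuine obstacle here; the one point worth stating carefully is that the equivalence $x\in st(F_n,\mathcal{U}'_n)\iff x\in st(F_n,\mathcal{U}_n)$ holds \emph{separately for each} $n$ (for points of $A$), which is exactly what lets the cofinite index set supplied by SSH be transferred verbatim to the families $\mathcal{U}_n$. No appeal to regularity or any cardinal invariant is needed, and the finiteness of the $F_n$ plays no role beyond being inherited unchanged.
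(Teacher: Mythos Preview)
Your argument is correct and is exactly the adaptation of the proof of Proposition~\ref{caratterizzazione} that the paper has in mind; the paper itself does not spell out a proof but simply remarks that it is similar to that of Proposition~\ref{caratterizzazione}. The only new observation---that the pointwise equivalence $x\in st(F_n,\mathcal{U}'_n)\Leftrightarrow x\in st(F_n,\mathcal{U}_n)$ for $x\in A$ transfers the cofinite index set intact---is precisely the point, and you have stated it cleanly.
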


\begin{definition}\rm\cite{KKS-MS} 
	A space $X$ is
	
	\begin{itemize}
				\item 
		set star Hurewicz (briefly, set SH) if for each nonempty subset $A \subseteq X$ and for each sequence $( {\mathcal{U}}_{n}: n \in \omega)$ of collection of open sets of $X$ such that $\overline A\subseteq \bigcup {\mathcal{U}}_{n}$ for every $n \in \omega$, there exists		
		 a sequence $({\cal V}_n : n \in \Bbb N)$  such that ${\mathcal V}_n$, $n\in\omega$, is a finite subset of ${\mathcal U}_n$ and $\forall x \in A$, $x \in st(\bigcup {\mathcal{V}}_{n},{\mathcal{U}}_{n})$ for all but finitely many $n \in \omega$.
		\item 
		set strongly star Hurewicz (briefly, set SSH) if for each nonempty subset $A \subseteq X$ and for each sequence $( {\mathcal{U}}_{n}: n \in \omega)$ of collection of open sets of $X$ such that $\overline A\subseteq \bigcup {\mathcal{U}}_{n}$ for every $n \in \omega$, there exists		
		 a sequence $( F_{n}: n \in \omega)$ such that $F_n$, $n\in\omega$, is a finite subset of $\overline{A}$ and $\forall x \in A$, $x \in st(F_{n},{\mathcal{U}}_{n})$ for all but finitely many $n \in \omega$.
				\end{itemize}
	\end{definition}

\begin{example}\label{exampleSSH}\rm ($\frak b<\frak d$)
There is a Tychonoff set SSM space which is not set SSH.
\end{example}
Consider an unbounded subset $X$ of the Baire space $\omega^\omega$ of cardinality $\frak b$. Then $X$ is not Hurewicz and, by Corollary \ref{set-SSM}, $X$ is set SSM. Since $X$ is a paracompact space and in the class of paracompact Hausdorff spaces we have that H $\Leftrightarrow $ SH 
(see \cite{BCK}), we have that $X$ is not set SSH.    $\hfill\triangle$

\smallskip

Recall the following characterization of SSH spaces.

\begin{theorem}\rm \cite{BMat}\label{BMat}
	The following properties are equivalent:
	\begin{itemize}
		\item[(i)] $\Psi({\cal A})$ is SSH
		\item[(ii)] $|{\cal A}| < \mathfrak{b}$.
	\end{itemize}
\end{theorem}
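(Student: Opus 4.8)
The plan is to prove the equivalence by establishing the two implications separately, in each case translating the star–covering condition into a statement about $\omega^\omega$ and invoking the definition of $\mathfrak{b}$: every family of size $<\mathfrak{b}$ is $\le^*$-bounded, while every unbounded family has size $\ge\mathfrak{b}$. Throughout one uses the structure of $\Psi(\mathcal A)$: the set $\omega$ is dense and consists of isolated points, $\mathcal A$ is closed and discrete, and every open set meeting $\mathcal A$ in a point $a$ contains $\{a\}\cup(a\setminus s)$ for some finite $s\subseteq\omega$. The whole argument is parallel to the proof of Theorem \ref{BoMat} for SSM, with ``cofinal/non-cofinal'' replaced by ``unbounded/bounded''; note that for plain SSH (unlike set SSH) a pure extent argument cannot work, since $e(\Psi(\mathcal A))=|\mathcal A|$ may lie strictly between $\mathfrak{b}$ and $\mathfrak{c}$, so the combinatorial bookkeeping below is unavoidable.

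For (ii)$\Rightarrow$(i) I would assume $|\mathcal A|<\mathfrak{b}$ and, given a sequence $(\mathcal U_n:n\in\omega)$ of open covers of $\Psi(\mathcal A)$, argue as follows. For each $n$ and each $a\in\mathcal A$ choose $U_{n,a}\in\mathcal U_n$ with $a\in U_{n,a}$ and a finite $s_{n,a}\subseteq\omega$ with $\{a\}\cup(a\setminus s_{n,a})\subseteq U_{n,a}$, and set $g_a(n)=\min(a\setminus s_{n,a})$, so that $g_a(n)\in a\cap U_{n,a}$. Since $|\{g_a:a\in\mathcal A\}|\le|\mathcal A|<\mathfrak{b}$, this family is bounded; fix $h\in\omega^\omega$ with $h(n)\ge n$ for all $n$ and, for each $a$, some $N_a$ with $g_a(n)\le h(n)$ for $n\ge N_a$. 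Put $F_n=\{0,1,\dots,h(n)\}$, a finite subset of $\Psi(\mathcal A)$. If $a\in\mathcal A$ and $n\ge N_a$ then $g_a(n)\in F_n\cap U_{n,a}$, so $a\in st(F_n,\mathcal U_n)$; if $m\in\omega$ then $m\in F_n$ for every $n\ge m$ and $m$ lies in some member of $\mathcal U_n$, so $m\in st(F_n,\mathcal U_n)$ for all but finitely many $n$. Hence $(F_n:n\in\omega)$ witnesses that $\Psi(\mathcal A)$ is SSH.

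For the contrapositive of (i)$\Rightarrow$(ii) I would assume $|\mathcal A|\ge\mathfrak{b}$, fix an unbounded family $\{g_\xi:\xi<\mathfrak{b}\}\subseteq\omega^\omega$ and distinct points $\{a_\xi:\xi<\mathfrak{b}\}\subseteq\mathcal A$, and for each $n\in\omega$ set $\mathcal U_n=\{\{m\}:m\in\omega\}\cup\{\{a_\xi\}\cup(a_\xi\setminus g_\xi(n)):\xi<\mathfrak{b}\}\cup\{\{a\}\cup a:a\in\mathcal A,\ a\notin\{a_\xi:\xi<\mathfrak{b}\}\}$ (here $g_\xi(n)$ is identified with $\{0,\dots,g_\xi(n)-1\}$). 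Each $\mathcal U_n$ is an open cover of $\Psi(\mathcal A)$, and for every $\xi$ the only member of $\mathcal U_n$ containing $a_\xi$ is $\{a_\xi\}\cup(a_\xi\setminus g_\xi(n))$. Now let $(F_n:n\in\omega)$ be any sequence of finite subsets of $\Psi(\mathcal A)$ and put $d(n)=\max(F_n\cap\omega)$ (and $d(n)=0$ if $F_n\cap\omega=\emptyset$). Since $\bigcup_nF_n$ is countable, deleting the countably many $\xi$ with $a_\xi\in\bigcup_nF_n$ leaves an unbounded subfamily of $\{g_\xi:\xi<\mathfrak{b}\}$ (an unbounded family stays unbounded after removing countably many members, since any countable family is bounded); so one may pick $\xi$ with $a_\xi\notin\bigcup_nF_n$ and $g_\xi(n)>d(n)$ for infinitely many $n$. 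For each such $n$ we have $a_\xi\notin F_n$ and $(a_\xi\setminus g_\xi(n))\cap F_n=\emptyset$, hence $a_\xi\notin st(F_n,\mathcal U_n)$; thus no sequence of finite sets witnesses SSH at the point $a_\xi$, so $\Psi(\mathcal A)$ is not SSH.

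I do not expect a serious obstacle: the verifications that the $g_a$'s and the covers $\mathcal U_n$ are well defined, that each $\mathcal U_n$ is an open cover, and that $\{a_\xi\}\cup(a_\xi\setminus g_\xi(n))$ is the unique member of $\mathcal U_n$ through $a_\xi$ are all routine. The only point requiring a moment's care is, in the second implication, the need to discard in advance the countably many indices $\xi$ for which $a_\xi$ already lies in some $F_n$ before invoking unboundedness of the $g_\xi$'s against $d$; everything else is a transcription of the $\mathfrak{d}$/SSM argument underlying Theorem \ref{BoMat}.
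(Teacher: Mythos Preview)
Your argument is correct. Both directions are handled properly: in (ii)$\Rightarrow$(i) the choice $g_a(n)=\min(a\setminus s_{n,a})$ and the bound $h$ with $h(n)\ge n$ give finite sets $F_n$ that eventually star-cover every point; in the converse, your observation that removing countably many functions from an unbounded family leaves it unbounded (since countable families are bounded) is exactly what is needed to select a suitable $\xi$, and the uniqueness of the member of $\mathcal U_n$ through $a_\xi$ is checked correctly.

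There is, however, nothing to compare against: the paper does not prove this theorem. It is quoted verbatim from \cite{BMat} (Bonanzinga and Matveev, \emph{Some covering properties for $\Psi$-spaces}) and used only as a black box to obtain the subsequent example of an SSH space that is not set SSH. So your proposal supplies a full proof where the present paper gives none; it is in fact the standard argument one finds in \cite{BMat}, parallel to the SSM/$\mathfrak d$ case you mention.
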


Then, we can "easily" give the following result (the same example was given in \cite{Sing3} using a longer proof).

\begin{example}
	\rm ($\omega_1<\frak b$) There exists a SSH not set SSH space.
\end{example}

Assume $\omega_1<\frak b$ and consider $\Psi({\cal A})$ with $|{\cal A}| = \omega_1$. Then, by Theorem \ref{BMat} and since $e(\Psi({\cal A}))>\omega$, we have that $\Psi({\cal A})$ is SSH not a set SSH space. $\hfill\triangle$

\begin{question}\rm Does there exist a ZFC example of a SSH not set SSH space? 
	\end{question}

By Theorem \ref{Sakai3} we can give the following
\begin{theorem}\label{Sakai4}\rm
	If $X$ is a regular set SH space , then every closed and discrete subspace of $X$ has cardinality less than $\frak c$. Hence, we have $e(X)\leq \frak c$.
\end{theorem}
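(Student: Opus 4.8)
The plan is to deduce Theorem \ref{Sakai4} directly from Theorem \ref{Sakai3} by showing that, for regular spaces, the set SH property implies the set SM property. In fact, this implication holds in full generality: if $X$ satisfies set SH, then given any nonempty $A\subseteq X$ and any sequence $({\cal U}_n:n\in\omega)$ of families of open sets with $\overline A\subseteq\bigcup{\cal U}_n$ for each $n$, the set SH property yields finite subfamilies ${\cal V}_n\subseteq{\cal U}_n$ such that every $x\in A$ lies in $st(\bigcup{\cal V}_n,{\cal U}_n)$ for all but finitely many $n$; in particular every $x\in A$ lies in $st(\bigcup{\cal V}_n,{\cal U}_n)$ for at least one $n$, so $A\subseteq\bigcup_{n\in\omega}st(\bigcup{\cal V}_n,{\cal U}_n)$, which is exactly the set SM conclusion for the same $A$ and the same sequence. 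Hence set SH $\Rightarrow$ set SM.

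Once this is in hand, the argument is immediate: let $X$ be a regular set SH space. Then $X$ is a regular set SM space, so Theorem \ref{Sakai3} applies and gives that every closed and discrete subspace of $X$ has cardinality less than $\frak c$; consequently $e(X)\leq\frak c$. No separate construction is needed — the entire content is repackaged from the set SM case, exactly in the spirit of the sentence preceding the statement ("By Theorem \ref{Sakai3} we can give the following").

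There is essentially no obstacle here: the only thing to be careful about is that the implication set SH $\Rightarrow$ set SM is applied with the \emph{same} witnessing data (subset $A$ and sequence of open families), so that no change of covers is required and regularity is used only through its single invocation inside Theorem \ref{Sakai3}. The proof that the authors will likely write is therefore one or two lines: observe set SH $\Rightarrow$ set SM and invoke Theorem \ref{Sakai3}. If one wanted to be fully self-contained one could instead re-run the cofinality/diagonalization construction from the proof of Theorem \ref{Sakai3} verbatim (choosing $Z=\{y_\alpha:\alpha<\frak c\}$ and the covers ${\cal U}_n$ as there, and noting that the point $y_\gamma$ fails to be starred by $\bigcup{\cal V}_n$ for \emph{any} $n$, a fortiori for all but finitely many $n$), but this merely reproves the reduction and is not necessary.
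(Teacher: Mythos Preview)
Your proposal is correct and matches the paper's approach exactly: the paper gives no separate proof of Theorem~\ref{Sakai4} but simply introduces it with ``By Theorem~\ref{Sakai3} we can give the following,'' relying implicitly on the obvious implication set SH $\Rightarrow$ set SM that you spell out. Your additional remark that one could alternatively rerun the diagonalization argument verbatim is accurate but, as you note, unnecessary.
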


In \cite[Esempio 2.4]{Sing3} it is given a Hausdorff SH space which is not set SH. Now we can provide the following

\begin{example}\label{EX}
		\rm A Tychonoff SC (hence SH) space which is not set SH.
\end{example} 
Consider the space $X(\frak c)$ of Example \ref{EXAMPLE}. $X(\frak c)$ is SC (hence SH) and, by Theorem \ref{Sakai4}, it is not set SH. $\hfill\triangle$ 

\smallskip

Recall the following

\begin{proposition}
 	\rm \cite[Corollary 3.10]{CASASDELAROSA2019572} Every SL (SSL) space of cardinality less than $\frak b$ is SH (SSH).
 \end{proposition}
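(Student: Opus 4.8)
The plan is to imitate the proof of Proposition~\ref{Sakai1} (equivalently, the proof of Proposition~\ref{setSL implys setSM}), replacing the cardinal $\mathfrak{d}$ by $\mathfrak{b}$ and the ``not cofinal'' step by a boundedness step. The point is that a family of functions of size $<\mathfrak{b}$ is \emph{bounded} with respect to $\le^*$, and $f\le^* g$ means precisely $f(n)\le g(n)$ for all but finitely many $n$ — which is exactly the quantifier appearing in the definition of SH/SSH. So the strengthening of the conclusion from ``for some $n$'' (Menger) to ``for all but finitely many $n$'' (Hurewicz) will come for free once we use $\mathfrak b$ in place of $\mathfrak d$.

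First I would handle the SL case. Let $X$ be an SL space with $|X|<\mathfrak{b}$, and let $(\mathcal{U}_n : n\in\omega)$ be a sequence of open covers of $X$. For each $n$, use SLness to fix a countable subfamily $\mathcal{V}_n=\{V_{n,m} : m\in\omega\}$ of $\mathcal{U}_n$ with $st(\bigcup\mathcal{V}_n,\mathcal{U}_n)=X$. For each $x\in X$ and each $n\in\omega$, since $x\in st(\bigcup\mathcal{V}_n,\mathcal{U}_n)$ there is $U\in\mathcal{U}_n$ with $x\in U$ and $U\cap V_{n,m}\ne\emptyset$ for some $m$; hence $st(x,\mathcal{U}_n)\cap V_{n,m}\ne\emptyset$, and we let $f_x(n)$ be the least such $m$, so $f_x\in\omega^\omega$. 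As $|X|<\mathfrak{b}$, the set $\{f_x : x\in X\}$ is bounded, so there is $g\in\omega^\omega$ with $f_x\le^* g$ for every $x\in X$. Put $\mathcal{W}_n=\{V_{n,j} : j\le g(n)\}$, a finite subfamily of $\mathcal{U}_n$. Given $x\in X$, for all but finitely many $n$ we have $f_x(n)\le g(n)$, hence $V_{n,f_x(n)}\in\mathcal{W}_n$ and $st(x,\mathcal{U}_n)\cap\bigcup\mathcal{W}_n\ne\emptyset$, i.e. $x\in st(\bigcup\mathcal{W}_n,\mathcal{U}_n)$ for all but finitely many $n$. Thus $(\mathcal{W}_n : n\in\omega)$ witnesses that $X$ is SH.

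The SSL case is handled in exactly the same way, only with points in place of sets from $\mathcal{U}_n$: for each $n$ fix a countable $C_n=\{c_{n,m} : m\in\omega\}\subseteq X$ with $st(C_n,\mathcal{U}_n)=X$, define $f_x(n)$ to be the least $m$ with $c_{n,m}\in st(x,\mathcal{U}_n)$, obtain a bound $g$ for $\{f_x : x\in X\}$ from $|X|<\mathfrak{b}$, and set $F_n=\{c_{n,m} : m\le g(n)\}$, a finite subset of $X$; then $x\in st(F_n,\mathcal{U}_n)$ for all but finitely many $n$, so $X$ is SSH. I do not expect a genuine obstacle here: the only thing needing attention is the match between the relation $\le^*$ and the ``all but finitely many $n$'' clause in the definition of SH/SSH, and this match is precisely what the boundedness hypothesis supplies; the rest is the routine bookkeeping already present in Proposition~\ref{setSL implys setSM}.
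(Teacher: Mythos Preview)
Your argument is correct and is exactly the approach the paper uses: although the paper itself does not prove this proposition (it is merely quoted from \cite{CASASDELAROSA2019572}), the paper's proof of the set analogue, Proposition~\ref{setSL implys setSH}, is line-for-line the same argument you give, with the boundedness of $\{f_x:x\in X\}$ (since $|X|<\mathfrak b$) yielding the ``all but finitely many $n$'' clause needed for SH/SSH.
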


In analogy to Proposition \ref{setSL implys setSM} and Proposition \ref{frakd}, we can prove the following

\begin{proposition}\label{setSL implys setSH}
 	\rm Every set SL (set SSL) space of cardinality less than $\frak b$ is set SH (set SSH).
 \end{proposition}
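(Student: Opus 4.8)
The plan is to mimic the proofs of Propositions~\ref{setSL implys setSM} and~\ref{frakd}, replacing the appeal to the non-cofinality of a small family (which uses $\frak d$) by an appeal to its boundedness (which uses $\frak b$); this is precisely the change that turns a Menger-type conclusion into a Hurewicz-type one.

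For the implication ``set SL $\Rightarrow$ set SH'', I would start with a set SL space $X$ with $|X|<\frak b$, a nonempty $A\subseteq X$, and a sequence $(\mathcal{U}_n:n\in\omega)$ of families of open sets with $\overline A\subseteq\bigcup\mathcal{U}_n$ for every $n$. Using set SL, for each $n$ fix a countable subfamily $\mathcal{V}_n=\{V_{n,m}:m\in\omega\}\subseteq\mathcal{U}_n$ with $A\subseteq st(\bigcup\mathcal{V}_n,\mathcal{U}_n)$. For each $x\in A$ I would then pick $f_x\in\omega^\omega$ such that $st(x,\mathcal{U}_n)\cap V_{n,f_x(n)}\neq\emptyset$ for all $n\in\omega$. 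Since $|\{f_x:x\in A\}|\le|A|<\frak b$, this family is bounded, so there is $g\in\omega^\omega$ with $f_x\le^*g$ for every $x\in A$. Setting $\mathcal{W}_n=\{V_{n,j}:j\le g(n)\}$ --- a finite subfamily of $\mathcal{U}_n$ --- one gets, for each $x\in A$ and each of the cofinitely many $n$ with $f_x(n)\le g(n)$, that $V_{n,f_x(n)}\in\mathcal{W}_n$ meets $st(x,\mathcal{U}_n)$, whence $x\in st(\bigcup\mathcal{W}_n,\mathcal{U}_n)$. Thus $x\in st(\bigcup\mathcal{W}_n,\mathcal{U}_n)$ for all but finitely many $n$, which is exactly set SH.

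For ``set SSL $\Rightarrow$ set SSH'' I would run the same argument with points in place of sets: using set SSL, for each $n$ fix a countable $C_n=\{c_{n,m}:m\in\omega\}\subseteq\overline A$ with $A\subseteq st(C_n,\mathcal{U}_n)$; for $x\in A$ and $n\in\omega$ choose $f_x(n)\in\omega$ with $x\in st(c_{n,f_x(n)},\mathcal{U}_n)$; take $g$ with $f_x\le^*g$ for all $x\in A$ (again by $|A|<\frak b$); and put $F_n=\{c_{n,j}:j\le g(n)\}$, a finite subset of $\overline A$. For every $x\in A$ and every $n$ with $f_x(n)\le g(n)$ we have $c_{n,f_x(n)}\in F_n$, so $x\in st(c_{n,f_x(n)},\mathcal{U}_n)\subseteq st(F_n,\mathcal{U}_n)$; hence $x\in st(F_n,\mathcal{U}_n)$ for all but finitely many $n$, giving set SSH.

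I do not expect a genuine obstacle here --- the argument is a routine strengthening of Propositions~\ref{setSL implys setSM} and~\ref{frakd}. The only point that needs care is extracting the ``all but finitely many $n$'' clause of the Hurewicz-type definition from the relation $f_x\le^*g$: for each fixed $x$ the set $\{n:f_x(n)>g(n)\}$ is finite, and this is exactly what $\frak b$ (rather than $\frak d$) buys us. Everything else --- finiteness of $\mathcal{W}_n$ and $F_n$, the inclusions $st(c,\mathcal{U}_n)\subseteq st(F_n,\mathcal{U}_n)$ for $c\in F_n$, and the reduction $\overline A\subseteq\bigcup\mathcal{U}_n$ --- is immediate.
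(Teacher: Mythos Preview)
Your proposal is correct and follows essentially the same argument as the paper's proof: choose the countable subfamilies ${\cal V}_n$, define $f_x$ via $st(x,{\cal U}_n)\cap V_{n,f_x(n)}\neq\emptyset$, bound the family $\{f_x:x\in A\}$ by some $g$ using $|A|<\frak b$, and take ${\cal W}_n=\{V_{n,j}:j\le g(n)\}$. The paper only sketches the set SSL $\Rightarrow$ set SSH case with the remark that the proof is similar, whereas you spell it out explicitly; otherwise the two arguments coincide.
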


\begin{proof}
	Let $X$ be a set SL space of cardinality less than $\frak b$ (the proof is similar if $X$ is set SSL). Let $A\subseteq X$ and $({\cal U}_n\,\,:\,\, n\in\omega)$ be a sequence of families of open sets of $X$ such that $\overline{A}\subseteq \bigcup {\cal U}_n$ for every $n\in\omega$. For every $n\in \omega $ there is a countable subfamily ${\cal V}_n=\{V_{n,m}\,\,:\,\,m\in \omega\}$ of ${\cal U}_n$ such that $A\subseteq st(\bigcup {\cal V}_n, {\cal U}_n)$. For every $x\in A$ we choose a function $f_x\in\omega^\omega$ such that $st(x,{\cal U}_n)\cap V_{n, f_x(n)}\not= \emptyset$ for all $n\in\omega$. Since $\{f_x\,\,:\,\,x\in A\}$  is a bounded family in $(\omega^\omega, \leq^*)$, there exists $g\in \omega^\omega$ such that for every $x\in A$ we have that $f_x(n)\leq g(n)$ for every but finitely many $n\in\omega$. Let ${\cal W}_n=\{V_{n,j}\,\,:\,\,j\leq g(n)\}$. Then for every $x\in A$ we have that $x\in st(\bigcup {\cal W}_n, {\cal U}_n)$ for all but finitely many $n\in\omega$.
\end{proof}

Then, by Proposition \ref{Proposition 3.1}, we have

\begin{corollary}\label{set-SSH}
	\rm For every $T_1$ space $X$ of cardinality less than $\frak b$, the following are equivalent:
\begin{enumerate}
\item $X$ is set SSH 
\item $e(X)=\omega$.
\end{enumerate}
\end{corollary}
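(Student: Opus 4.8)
The plan is to reduce the statement, exactly as in the proof of Corollary~\ref{set-SSM}, to Proposition~\ref{Proposition 3.1} together with the Hurewicz analogue Proposition~\ref{setSL implys setSH}. Since the statement is an equivalence, I would treat the two implications separately, and I expect the whole argument to be a short formal composition of results already established; the cardinality hypothesis $|X|<\frak b$ will be used only in one direction.

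For $1.\Rightarrow 2.$ I would not use the cardinality hypothesis at all. First I would observe that every set SSH space is set SSL: applying the set SSH selection principle to a single family $\mathcal U$ of open sets (that is, to the constant sequence $\mathcal U_n=\mathcal U$) produces finite sets $F_n\subseteq\overline A$ such that every $x\in A$ lies in $st(F_n,\mathcal U)$ for all but finitely many $n$; then $C=\bigcup_{n\in\omega}F_n$ is a countable subset of $\overline A$ with $A\subseteq st(C,\mathcal U)$, which is exactly what set SSL requires. Since $X$ is $T_1$, Proposition~\ref{Proposition 3.1} then yields $e(X)=\omega$. Alternatively, one could simply invoke the chain $\hbox{CC}\Rightarrow\hbox{set SSH}\Rightarrow\hbox{set SSM}\Rightarrow\hbox{countable extent}$ recorded earlier in the paper.

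For $2.\Rightarrow 1.$ I would argue: assuming $e(X)=\omega$ and using that $X$ is $T_1$, Proposition~\ref{Proposition 3.1} gives that $X$ is set SSL; since moreover $|X|<\frak b$, the ``set SSL'' half of Proposition~\ref{setSL implys setSH} upgrades this to set SSH. I do not anticipate any genuine obstacle here — the corollary is a formal consequence of Propositions~\ref{Proposition 3.1} and~\ref{setSL implys setSH} — so the only things demanding care are bookkeeping points: that the $T_1$ hypothesis is precisely what Proposition~\ref{Proposition 3.1} needs, and that the bound $|X|<\frak b$ enters only in the implication $2.\Rightarrow 1.$, via Proposition~\ref{setSL implys setSH}.
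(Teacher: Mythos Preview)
Your proposal is correct and matches the paper's own argument: the corollary is recorded immediately after Proposition~\ref{setSL implys setSH} with the words ``Then, by Proposition~\ref{Proposition 3.1}, we have,'' i.e., exactly the two-step reduction you describe (set SSL $\Leftrightarrow$ countable extent for $T_1$ spaces, plus the set SSL $\Rightarrow$ set SSH upgrade under $|X|<\frak b$). Your observation that the cardinality bound is needed only for $2.\Rightarrow 1.$ is also accurate.
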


By Corollary \ref{set-SSM} and Corollary \ref{set-SSH} we have the following

\begin{corollary}\rm\label{SSHextent}
	For spaces $X$ such that $|X|<{\frak b}$,  the following are equivalent:
\begin{enumerate}
\item $X$ is set SSM
\item $X$ is set SSH
\item $e(X)=\omega$.
\end{enumerate}
\end{corollary}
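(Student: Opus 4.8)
The plan is to derive Corollary \ref{SSHextent} directly from the two earlier corollaries, Corollary \ref{set-SSM} and Corollary \ref{set-SSH}, together with the obvious inequality $\frak b\le\frak d$. First I would observe that if $|X|<\frak b$, then in particular $|X|<\frak d$, so Corollary \ref{set-SSM} applies and gives the equivalence of ``$X$ is set SSM'' with ``$e(X)=\omega$''. Independently, since $|X|<\frak b$, Corollary \ref{set-SSH} applies and gives the equivalence of ``$X$ is set SSH'' with ``$e(X)=\omega$''. Chaining these two equivalences through the common statement $e(X)=\omega$ yields the equivalence of all three conditions, which is exactly what the corollary asserts.

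Concretely, the steps in order are: (1) note $\frak b\le\frak d$ (standard, or invoke \cite{vD}); (2) from $|X|<\frak b\le\frak d$ invoke Corollary \ref{set-SSM} to get $(1)\Leftrightarrow(3)$; (3) from $|X|<\frak b$ invoke Corollary \ref{set-SSH} to get $(2)\Leftrightarrow(3)$; (4) conclude $(1)\Leftrightarrow(2)\Leftrightarrow(3)$. No separation axiom beyond $T_1$ is needed because both Corollary \ref{set-SSM} and Corollary \ref{set-SSH} are stated for $T_1$ spaces, and the statement of Corollary \ref{SSHextent} should implicitly carry the same $T_1$ hypothesis (inherited from the corollaries it rests on).

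There is essentially no obstacle here: the entire content is bookkeeping of cardinal inequalities and a two-line chase through previously established equivalences. The only point requiring a moment's care is making sure the hypothesis $|X|<\frak b$ is strong enough to trigger Corollary \ref{set-SSM}, which requires $|X|<\frak d$ — this is immediate from $\frak b\le\frak d$. If one wanted a self-contained argument avoiding Corollary \ref{set-SSM}, one could instead combine Proposition \ref{frakd} (every set SSL space of cardinality less than $\frak d$ is set SSM) with Proposition \ref{setSL implys setSH} and Proposition \ref{Proposition 3.1}, but routing through the two corollaries is cleaner and is clearly the intended proof.

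\begin{proof}
Recall that $\frak b\le\frak d$. Hence if $|X|<\frak b$ then also $|X|<\frak d$, so by Corollary \ref{set-SSM} statement (1) is equivalent to (3). On the other hand, since $|X|<\frak b$, Corollary \ref{set-SSH} gives that statement (2) is equivalent to (3). Therefore (1), (2) and (3) are all equivalent.
\end{proof}
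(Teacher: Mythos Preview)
Your proof is correct and matches the paper's own argument exactly: the paper simply states that the corollary follows from Corollary~\ref{set-SSM} and Corollary~\ref{set-SSH}, and your write-up makes explicit the one small point (that $\mathfrak b\le\mathfrak d$, so $|X|<\mathfrak b$ allows Corollary~\ref{set-SSM} to be invoked) and the implicit $T_1$ hypothesis inherited from those corollaries.
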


In \cite{Sing3} the authors give a $T_1$ set SH space which is not set SSH. Now we provide the following 
\begin{example}\label{set SH, not set SSH} 
	\rm($\omega_1<\frak b$)  A Tychonoff set SH space which is not set SSH.
\end{example}
Assume $\omega_1<\frak b$ and consider $\Psi(\mathcal A)$ with $|{\mathcal A}|=\omega_1$. Since $\Psi(\mathcal A)$ is separable, it is set SL hence, by Proposition \ref{setSL implys setSH}, it is set SH. Since $e(\Psi(\mathcal A))>\omega$, $\Psi(\mathcal A)$ is not set SSH. $\hfill\triangle$

\bigskip

Using Example \ref{product} we can show that
\begin{proposition}
	\rm  Set SH property is not preserved in the product with compact spaces.
	\end{proposition}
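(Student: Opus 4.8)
The plan is to reuse the space $X$ and the compact space $Y$ from Example~\ref{product}. Recall that $X=\omega_1\cup A$ is set SC, and in fact (by Theorem~\ref{Sakai4}, or directly, since it is a regular set SC space, hence countably compact by Proposition~\ref{regular set SC}, so certainly Hurewicz-type behaviour is not an issue) it is set SH; more to the point, being set SC it is trivially set SH because any finite subfamily witnessing the set SC property for a single cover also works when one is handed a whole sequence of collections. So I would first observe that $X$ is set SH.

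Next I would invoke Example~\ref{product}: for the compact space $Y$ with $c(Y)>\omega$, the product $X\times Y$ is not set SL. The key link is that set SH implies set SL: if for each sequence $({\cal U}_n:n\in\omega)$ one can find finite ${\cal V}_n\subseteq{\cal U}_n$ so that every point of $A$ lies in $st(\bigcup{\cal V}_n,{\cal U}_n)$ for all but finitely many $n$, then in particular $A\subseteq\bigcup_{n\in\omega}st(\bigcup{\cal V}_n,{\cal U}_n)$; applying this to the constant sequence ${\cal U}_n={\cal U}$ gives a countable subfamily $\bigcup_n{\cal V}_n$ of ${\cal U}$ whose star over itself contains $A$, which is exactly the set SL property. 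Hence a space that is not set SL is not set SH either, so $X\times Y$ is not set SH.

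Putting the two observations together yields the statement: $X$ is set SH, $Y$ is compact, but $X\times Y$ is not set SH. I do not anticipate a real obstacle here; the only point requiring a line of care is the implication set SH $\Rightarrow$ set SL (equivalently, set SSH $\Rightarrow$ set SSM was already noted implicitly), which is immediate from the definitions as sketched above, together with the fact — already recorded in Example~\ref{product} — that $X$ itself is set SC and therefore set SH.

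\begin{proof}
Consider the set SC space $X=\omega_1\cup A$ and the compact space $Y$ with $c(Y)>\omega$ of Example~\ref{product}. Being set SC, $X$ is set SH: given a nonempty $A'\subseteq X$ and a sequence $({\cal U}_n:n\in\omega)$ of collections of open sets with $\overline{A'}\subseteq\bigcup{\cal U}_n$ for every $n$, apply the set SC property to $A'$ and ${\cal U}_0$ to get a finite ${\cal V}_0\subseteq{\cal U}_0$ with $A'\subseteq st(\bigcup{\cal V}_0,{\cal U}_0)$, and put ${\cal V}_n=\emptyset$ for $n\geq 1$; then every $x\in A'$ lies in $st(\bigcup{\cal V}_n,{\cal U}_n)$ for all but finitely many $n$.

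On the other hand, set SH implies set SL: if $({\cal V}_n:n\in\omega)$ is a sequence of finite subsets of a sequence $({\cal U}_n:n\in\omega)$ witnessing set SH for a nonempty set $A'$, then in particular $A'\subseteq\bigcup_{n\in\omega}st(\bigcup{\cal V}_n,{\cal U}_n)$; taking ${\cal U}_n={\cal U}$ constant shows that the countable family $\bigcup_{n\in\omega}{\cal V}_n\subseteq{\cal U}$ satisfies $A'\subseteq st(\bigcup_{n\in\omega}\bigcup{\cal V}_n,{\cal U})$, which is the set SL property. Hence a space which is not set SL is not set SH.

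By Example~\ref{product}, $X\times Y$ is not set SL, hence not set SH, while $X$ is set SH and $Y$ is compact. Therefore set SH is not preserved in the product with compact spaces.
\end{proof}
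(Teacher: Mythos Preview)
Your overall strategy coincides with the paper's: invoke Example~\ref{product}, use that $X$ is set SC (hence set SH), and that $X\times Y$ is not set SL (hence not set SH). The implication set SH $\Rightarrow$ set SL is handled correctly.

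However, your written argument that set SC $\Rightarrow$ set SH is wrong as stated. You choose a finite $\mathcal V_0\subseteq\mathcal U_0$ with $A'\subseteq st(\bigcup\mathcal V_0,\mathcal U_0)$ and then set $\mathcal V_n=\emptyset$ for all $n\geq 1$. With this choice, $st(\bigcup\mathcal V_n,\mathcal U_n)=\emptyset$ for every $n\geq 1$, so each $x\in A'$ lies in $st(\bigcup\mathcal V_n,\mathcal U_n)$ for \emph{only one} value of $n$, namely $n=0$; it \emph{fails} for every $n\geq 1$. That witnesses set SM, not set SH, where the requirement is that $x\in st(\bigcup\mathcal V_n,\mathcal U_n)$ for \emph{all but finitely many} $n$. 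The fix is immediate: apply the set SC property separately to each $\mathcal U_n$ to obtain a finite $\mathcal V_n\subseteq\mathcal U_n$ with $A'\subseteq st(\bigcup\mathcal V_n,\mathcal U_n)$; then every $x\in A'$ lies in $st(\bigcup\mathcal V_n,\mathcal U_n)$ for \emph{all} $n$, and the set SH condition is satisfied. With this correction your proof is complete and matches the paper's (unwritten) argument.
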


By Corollary \ref{preservationextent} we have that

\begin{proposition}
	\rm The product of a set SSH space with a compact space has countable extent.
	\end{proposition}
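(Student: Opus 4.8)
The plan is to reduce the statement to Corollary \ref{preservationextent} by first observing that every set SSH space already has countable extent. Recall that a set SSH space is trivially set SSM: given a nonempty $A\subseteq X$ and a sequence $({\mathcal U}_n:n\in\omega)$ of families of open sets of $X$ with $\overline A\subseteq\bigcup{\mathcal U}_n$ for all $n$, the sequence $(F_n:n\in\omega)$ of finite subsets of $\overline A$ witnessing set SSH-ness satisfies, for each $x\in A$, that $x\in st(F_n,{\mathcal U}_n)$ for all but finitely many $n$; in particular $x\in\bigcup_{n\in\omega}st(F_n,{\mathcal U}_n)$, so $A\subseteq\bigcup_{n\in\omega}st(F_n,{\mathcal U}_n)$, which is exactly the set SSM condition. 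Hence, by the implication chain CC $\Rightarrow$ set SSH $\Rightarrow$ set SSM $\Rightarrow$ countable extent recalled above, any set SSH space $X$ satisfies $e(X)=\omega$.

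Second, let $Y$ be a compact space. The projection $\pi\colon X\times Y\to X$ is a perfect map, so Proposition \ref{perfect} (equivalently, a direct application of Corollary \ref{preservationextent} with the factor $X$ of countable extent) yields $e(X\times Y)=\omega$, as required. No separation axiom is needed, since neither ingredient uses one.

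There is essentially no obstacle in this argument: the only real content is the remark that set SSH-ness forces countable extent, after which the fact that countable extent is preserved under products with a compact factor (Corollary \ref{preservationextent}) closes the proof immediately.
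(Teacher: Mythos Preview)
Your proof is correct and coincides with the paper's: the paper simply writes ``By Corollary~\ref{preservationextent}'', relying implicitly on the chain set SSH $\Rightarrow$ set SSM $\Rightarrow$ countable extent that you spell out explicitly. The only quibble is your final remark that no separation axiom is needed: the paper's justification of set SSM $\Rightarrow$ countable extent passes through Proposition~\ref{Proposition 3.1}, which is stated for $T_1$ spaces, so that claim is not quite supported by the chain you cite (even though the implication does in fact hold without $T_1$).
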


Then, by Corollary \ref{SSHextent} we obtain

\begin{proposition}
	\rm The $T_1$ product of cardinality less than $\frak b$ of a set SSH space with a compact space is set SSH.
	\end{proposition}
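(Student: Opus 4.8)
The plan is to combine the already-established extent bound with the cardinality characterization from Corollary~\ref{set-SSH}. Let $X$ be a set SSH space and $Y$ a compact space, and suppose the product $X\times Y$ is $T_1$ and satisfies $|X\times Y|<{\frak b}$. First I would invoke the preceding proposition (the product of a set SSH space with a compact space has countable extent, which itself came from Corollary~\ref{preservationextent} via the perfect projection $X\times Y\to X$ and Proposition~\ref{perfect}) to conclude $e(X\times Y)=\omega$. Then, since $X\times Y$ is $T_1$ and $|X\times Y|<{\frak b}$, Corollary~\ref{set-SSH} applies directly: for $T_1$ spaces of cardinality less than ${\frak b}$, having countable extent is equivalent to being set SSH. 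Hence $X\times Y$ is set SSH, which is exactly the claim.

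So the entire argument is: (1) countable extent of the product, already proved; (2) the hypothesis $|X\times Y|<{\frak b}$ together with $T_1$-ness; (3) one application of Corollary~\ref{set-SSH}. There is essentially no computation. I expect the only point requiring a word of care is making sure the hypotheses of Corollary~\ref{set-SSH} are met — namely that $X\times Y$ is $T_1$ (given) and that its cardinality, not merely that of $X$, is below ${\frak b}$ (also given, since the statement says ``the $T_1$ product of cardinality less than ${\frak b}$''). No separation axiom beyond $T_1$ on the product is needed, and compactness of $Y$ is used only to guarantee that the projection is perfect so that the countable-extent transfer goes through.

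The main (indeed only) obstacle, if any, is bookkeeping: the result is a one-line corollary of Corollary~\ref{set-SSH} and the immediately preceding proposition, in complete analogy with the corollary ``The $T_1$ product of cardinality less than ${\frak d}$ of a set SSM space with a compact space is set SSM'' that follows Corollary~\ref{prodextent}. I would simply write: by the previous proposition $e(X\times Y)=\omega$, and since $X\times Y$ is $T_1$ with $|X\times Y|<{\frak b}$, Corollary~\ref{set-SSH} gives that $X\times Y$ is set SSH.
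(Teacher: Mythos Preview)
Your proposal is correct and follows essentially the same route as the paper: the paper simply states that the result follows from Corollary~\ref{SSHextent} (together with the preceding proposition giving countable extent of the product), which is exactly your argument with Corollary~\ref{set-SSH} in place of the equivalent Corollary~\ref{SSHextent}.
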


The following question is open.

\begin{question}
\rm  Is the product of a set SSH space with a compact space  a set SSH space?
\end{question}

\smallskip
We give the following useful diagram.

\begin{picture}(150,130)
\put(-50,110){{\sf Lindel\"of}}

\put(-30,100){\vector(0,-1){15}}

\put(-70,50){{\sf $$\boxed{ \begin{array}{c}
			\text{countable extent}\\
			\Updownarrow \text{{\scriptsize $T_1$}}\\
			\text{set SSL}\\
			\end{array}
		}$$}}

\put(-50,25){\vector(0,-1){80}}
\put(-60,-65){{\sf SSL}}

\put(0,25){\vector(0,-1){20}}
\put(-10,-8){{\sf set SL}}

\put(0,-10){\vector(0,-1){40}}
\put(-5,-65){{\sf SL}}
\put(-40,-60){\vector(1,0){30}}

\put(-56,-5){\vector(1,0){40}}
\put(-75,-8){{\sf ccc}}

\put(70,-60){\vector(-1,0){55}}
\put(75,-65){{\sf SM}}
\put(80,-10){\vector(0,-1){40}}
\put(65,-8){{\sf set SM}}
\put(60,-5){\vector(-1,0){35}}

\put(150,-95){\vector(-2,1){55}}
\put(145,-108){\vector(-4,1){180}}
\put(155,-110){{\sf SSM}}
\put(165,45){\vector(0,-1){135}}
\put(145,50){{\sf set SSM}}
\put(147,45){\vector(-1,-1){45}}
\put(140,53){\vector(-1,0){100}}
\put(165,105){\vector(0,-1){40}}
\put(160,110){{\sf M}}
\put(155,115){\vector(-1,0){160}}
\put(350,50){{\sf $$\boxed{ \begin{array}{c}
			\\
			\text{set SSC}\\
			\\
			\\
			\text{ CC $\Longleftrightarrow$ SSC }\\
			\\
			\end{array}
		}$$}}

\put(370,40){\vector(1,2){13}}
\put(400,65){\vector(1,-2){13}}
\put(390,20){\sf {\scriptsize $T_2$}}
\put(345,20){\vector(-1,-2){60}}
\put(345,53){\vector(-1,0){50}}
\put(395,3){\vector(0,-1){45}}
\put(380,-55){{\sf set SC}}
\put(403,-42){\vector(0,1){45}}
\put(405,-22){\sf {\scriptsize regular}}
\put(375,-48){\vector(-4,1){165}}
\put(380,-105){\vector(-4,1){180}}
\put(395,-60){\vector(0,-1){35}}
\put(390,-110){{\sf SC}}
\put(175,-8){{\sf set SH}}
\put(172,-5){\vector(-1,0){70}}
\put(190,-10){\vector(0,-1){40}}
\put(180,-65){{\sf SH}}
\put(175,-60){\vector(-1,0){80}}
\put(263,-110){{\sf SSH}}
\put(260,-97){\vector(-2,1){55}}
\put(260,-105){\vector(-1,0){74}}
\put(266,110){{\sf H}}
\put(264,115){\vector(-1,0){90}}
\put(250,50){{\sf set SSH}}
\put(255,45){\vector(-1,-1){45}}
\put(248,53){\vector(-1,0){50}}
\put(270,105){\vector(0,-1){40}}
\put(270,45){\vector(0,-1){140}}

\end{picture}

\vspace{5cm}

{\bf Acknowledgements.} The authors express gratitude to Masami Sakai for useful suggestions.
\newpage

\end{document}